\theoremstyle{plain}
\newtheorem{theorem}{Theorem}
\newtheorem{proposition}{Proposition}
\newtheorem{lemma}{Lemma}
\theoremstyle{definition}
\newtheorem{remark}{Remark}
\newtheorem{assumption}{Assumption}
\newcommand{\ord}{\mathrm{ord}}
\title{On the distribution of $\log |L(\sigma, \chi)|$  and $\log L(\sigma, \chi_D)$ in the modulus aspect}
\author{Manami Hosoi and Yumiko Umegaki}
\date{}
\begin{document}
\maketitle
%
%
\begin{abstract}
\par
Let $\sigma$ be a real number with $\sigma>1/2$.
For the certain average of values $\log |L(\sigma, \chi)|$ in the prime conductor aspect, we show that it can be expressed by an integral involving the same density function as the one which constructed for  the certain average of values of difference between logarithms of two symmetric power $L$-functions in the level aspect (see \cite{matsumoto-umegaki-ijnt}). 
For the distribution of values $\log L(\sigma, \chi_D)$ and $L'/L(\sigma, \chi_D)$ in the $D$-aspect, where $\chi_D$ is a real character attached to a fundamental discriminant $D$, we also show that there exists a density function.
\end{abstract}
%
%
\section{Introduction and main results}
\par
Let $\mathcal{L}(s, \chi)$ be either $\log L(s, \chi)$ or $L'/L (s,\chi)$, where $L(s, \chi)$ is a Dirichlet $L$-function associated to primitive Dirichlet characters of prime conductor $q$.
In 2011, Ihara and Matsumoto showed that the averages of values $\Phi(\mathcal{L}(s, \chi))$ at a fixed point $s=\sigma+it$ ($\sigma>1/2$) can be expressed as an integral involving a density function which called ``\textit{$M$-function}'' (see \cite{ihara-matsumoto-QJM} and \cite{ihara-matsumoto-moscow}), where $\Phi$ is any bounded continuous function or any compactly supported Riemann integrable function (see \cite{matsumoto}). 
Under Generalized Riemann Hypothesis (GRH), Ihara and Matsumoto \cite{ihara-matsumoto-moscow} constructed an $M$-function $\mathcal{M}_{\sigma}(w)$ which satisfies
\[
\lim_{\substack{q\to\infty\\ q\;:\;\text{prime}}} 
\frac{1}{q-2}
\underset{\chi\in X(q)}{\sum} 
\Phi(\mathcal{L}(s,\chi)) =\int_{\mathbb{C}} \mathcal{M}_{\sigma}(w)\Phi(w) |dw|,
\]
where $q>1$ is a prime number and $X(q)$ is a set of all primitive Dirichlet characters of conductor $q$.
On the other hand, Ihara and Matsumoto \cite{ihara-matsumoto-QJM} also obtained 
\[
\lim_{m\to\infty}
\frac{1}{\pi(m)}
\sum_{\substack{q\leq m\\ q\;:\;\text{prime}}} 
\frac{1}{q-2}
\underset{\chi\in X'(q, s)}{{\sum}} 
\Phi(\mathcal{L}(s,\chi)) =\int_{\mathbb{C}} \mathcal{M}_{\sigma}(w)\Phi(w) |dw|,
\]
unconditionally (not assuming GRH), where 
$\pi(m)$ means the number of the primes less than or equal to a given number $m$, 
\begin{align*}
X'(q, s)
=&
\{\chi\in X(q)\mid s\in G_{\chi}\},
\\
G_{\chi}
=&
\{z\in\mathbb{C} \mid \Re(z)>1/2\}\setminus\bigcup_{\rho} B_{\rho}(\chi),
\\
B_{\rho}(\chi)
=&
\{z\in\mathbb{C} \mid 1/2<\Re(z)\leq \Re(\rho), \; \Im(z)=\Im(\rho)\}
\end{align*}
and $\rho$ is a possible zero of $L(z,\chi)$ with $\Re(\rho)>1/2$.
We define the value of $\log L(z, \chi)$ for $z=u+iv \in G_{\chi}$ with $1/2< u \leq 1$ by the analytic continuation along the horizontal path $z=u'+iv$ for $u' \geq u$.
One of advantages of Ihara and Matsumoto's argument is that we can fined an $M$-function by only considering the case that $\Phi(w)=\psi_{z_1, z_2}(w)=\exp(i(z_1\overline{w}+z_2w)/2)$ for $|z_1|, |z_2|<R$, where $R$ is any positive real number. 
\par
Ihara and Matsumoto discussed the $M$-functions for the averages of values $\Phi(\log L(s, \chi))$ by two ways (see \cite{ihara-matsumoto-QJM} and \cite{ihara-matsumoto-moscow}). 
In \cite{ihara-matsumoto-QJM}, for a fixed point $s=\sigma+it$, 
they showed that we can construct the $M$-function for the average of $\psi_{z_1, z_2}(\log L(s, \chi))$ 
by a function 
\[
g_{\sigma, p}(t_p)=-\log (1-t_p p^{-\sigma})
\]
at $t_p= \chi(p)p^{-it}$.
Their argument in \cite{ihara-matsumoto-QJM} was applied to the study of the distribution of values of logarithms of automorphic $L$-functions in the level aspect.
In 2018, Matsumoto and the second author~\cite{matsumoto-umegaki-ijnt} studied the distribution of values of difference between logarithms of two symmetric power $L$-functions in the level aspect at a fixed point $s=\sigma>1/2$ by $g_{\sigma, p}(t_p)+g_{\sigma, p}(t_p^{-1})$.
They obtained an $M$-function for the certain averages of them by the strategy in \cite{ihara-matsumoto-QJM}.  
After that, Lebacque, Matsumoto, Mine and the second author \cite{LMMU} studied the distribution of values of logarithms of symmetric power $L$ functions and obtained $M$-functions for the certain averages of them in the level aspect by the strategy in \cite{ihara-matsumoto-QJM}, too.
Moreover we can see that the $p$-part of the $M$-functions is related to Sato-Tate measure.
On these results, the strategy in \cite{ihara-matsumoto-QJM} is may better than one in \cite{ihara-matsumoto-moscow} for focusing directly on the relationship between the properties of the Fourier coefficients of cusp forms and the constructing $p$-part of the $M$-functions, because we can see the involvement of Sato-Tate measure.
\par 
On the other hand, by the results in Ihara and Matsumoto~\cite{ihara-matsumoto-moscow}, Mourtada and Murty~\cite{mourtada-murty} obtained an $M$-function for the average of values $L'/L(\sigma, \chi_D)$ in the $D$-aspect, where $\chi_D$ is a real character attached to a fundamental discriminant $D$.
Then we can say that the both of methods in \cite{ihara-matsumoto-moscow} and \cite{ihara-matsumoto-QJM} are important.
We compare these methods in Remark~\ref{56} below.
\par 
The first aim of this paper is to prove Theorem~\ref{hosoi1} by the strategy in \cite{ihara-matsumoto-QJM}. 
By the proof of Theorem~\ref{hosoi1}, we can see the $M$-function in Theorem~\ref{hosoi1} is same as the one in \cite{matsumoto-umegaki-ijnt}.
%
%
\begin{theorem}\label{hosoi1}
For fixed $\sigma>1/2$, 
there exists a continuous non-negative function $\mathcal{M}_{\sigma}(x)$ such that
\[
\lim_{\substack{q\to\infty\\ q\;\text{: prime}}}
\frac{1}{|X(q)|}
\sum_{\chi\in X'(q, \sigma)}
\Phi(2\log |L(\sigma, \chi)|) 
=
\int_{\mathbb{R}} 
\mathcal{M}_{\sigma}(u)\Phi(u) 
\frac{du}{\sqrt{2\pi}}.
\] 
\end{theorem}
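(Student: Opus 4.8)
The plan is to follow the strategy of \cite{ihara-matsumoto-QJM}: reduce to exponential test functions and then evaluate the resulting characteristic function as an infinite product over primes. By the reduction used there (see also \cite{matsumoto}) it is enough to prove the asserted identity for $\Phi(u)=\psi_{z_1,z_2}(u)=e^{i(z_1+z_2)u/2}$ with $|z_1|,|z_2|<R$ ($R>0$ arbitrary), that is, for $\Phi(u)=e^{ixu}$; the passage to all bounded continuous, and then to all compactly supported Riemann integrable, $\Phi$ is standard. So the task is to show that
\[
\widetilde{\mathcal{M}}_\sigma(x):=\lim_{\substack{q\to\infty\\ q\,:\,\text{prime}}}\frac{1}{|X(q)|}\sum_{\chi\in X'(q,\sigma)}\exp\bigl(2ix\log|L(\sigma,\chi)|\bigr)
\]
exists for each such $x$, and that it equals $\int_{\mathbb{R}}\mathcal{M}_\sigma(u)e^{ixu}\tfrac{du}{\sqrt{2\pi}}$ for a continuous non-negative $\mathcal{M}_\sigma\in L^1(\mathbb{R})$; the theorem then follows by Fourier inversion.

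The first step is an algebraic reduction which also explains the coincidence of $\mathcal{M}_\sigma$ with the density of \cite{matsumoto-umegaki-ijnt}. Since $\sigma$ is real, $\overline{L(\sigma,\chi)}=L(\sigma,\overline\chi)$, hence $2\log|L(\sigma,\chi)|=\log L(\sigma,\chi)+\log L(\sigma,\overline\chi)$. For $\chi\in X'(q,\sigma)$ the value $\log L(\sigma,\chi)$ is the one fixed in the introduction, and $L(z,\chi)$ has no real zero $\ge\sigma$, hence no zero on $[\sigma,2]$; expanding both logarithms into their Euler products along $[\sigma,2]$ and using $\overline{\chi(p)}=\chi(p)^{-1}$ for $p\nmid q$ yields
\[
2\log|L(\sigma,\chi)|=\sum_{p}\Bigl(g_{\sigma,p}(\chi(p))+g_{\sigma,p}(\chi(p)^{-1})\Bigr),\qquad g_{\sigma,p}(t)=-\log(1-tp^{-\sigma}).
\]
This is precisely the shape $g_{\sigma,p}(t_p)+g_{\sigma,p}(t_p^{-1})$ studied in \cite{matsumoto-umegaki-ijnt}, now with $t_p=\chi(p)$ on the unit circle, so the local factors below will be the same as there.

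For a truncation parameter $y$ I would put $V_y(\chi)=\sum_{p\le y}\bigl(g_{\sigma,p}(\chi(p))+g_{\sigma,p}(\chi(p)^{-1})\bigr)$, a real number. For fixed $y$, the orthogonality relations for Dirichlet characters mod the prime $q$ make $(\chi(p))_{p\le y}$ equidistribute, as $q\to\infty$, with respect to the product of normalized Haar measures on the circles, so
\[
\lim_{q\to\infty}\frac{1}{|X(q)|}\sum_{\chi\in X(q)}e^{ixV_y(\chi)}=\prod_{p\le y}\int_0^1\exp\!\Bigl(-ix\log\bigl(1-2p^{-\sigma}\cos2\pi\theta+p^{-2\sigma}\bigr)\Bigr)d\theta=:\widetilde{\mathcal{M}}_\sigma^{(y)}(x).
\]
Because $|X(q)\setminus X'(q,\sigma)|=o(q)$ by a zero-density estimate for Dirichlet $L$-functions (almost every $\chi\in X(q)$ has $L(z,\chi)$ free of real zeros $\ge\sigma$) and $\Phi$ is bounded on its support, the limit is unchanged if $X(q)$ is replaced by $X'(q,\sigma)$. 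Letting $y\to\infty$, the $p$-th local factor equals $1+O_x(p^{-2\sigma})$ (the $O(p^{-\sigma})$ term has vanishing $\theta$-mean), so for $\sigma>1/2$ the product converges to $\widetilde{\mathcal{M}}_\sigma(x)$ and $\widetilde{\mathcal{M}}_\sigma^{(y)}\to\widetilde{\mathcal{M}}_\sigma$; together with a uniform-in-$q$ mean-square estimate
\[
\sup_{q}\ \frac{1}{|X(q)|}\sum_{\chi\in X'(q,\sigma)}\bigl|2\log|L(\sigma,\chi)|-V_y(\chi)\bigr|^{2}\ \longrightarrow\ 0\quad(y\to\infty)
\]
this permits interchanging $\lim_q$ and $\lim_y$ and identifies $\widetilde{\mathcal{M}}_\sigma$ with the characteristic function above. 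A standard non-degeneracy argument, as in \cite{matsumoto-umegaki-ijnt} (one local factor alone already decays in $|x|$), shows $\widetilde{\mathcal{M}}_\sigma\in L^1(\mathbb{R})$, so $\mathcal{M}_\sigma(u):=\tfrac1{\sqrt{2\pi}}\int_{\mathbb{R}}\widetilde{\mathcal{M}}_\sigma(x)e^{-ixu}dx$ is continuous, and it is non-negative since $\widetilde{\mathcal{M}}_\sigma$ is a pointwise limit of (positive-definite) characteristic functions of the atomic averages. Fourier inversion gives the theorem for $\Phi(u)=e^{ixu}$, hence in general.

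The step I expect to be the main obstacle is the uniform-in-$q$ mean-square approximation $2\log|L(\sigma,\chi)|\approx V_y(\chi)$ over $\chi\in X'(q,\sigma)$: one has to convert the analytically continued $\log L(\sigma,\chi)$ into an essentially finite Dirichlet polynomial with an error small in mean square \emph{uniformly in the modulus} $q$. This is exactly where the definition of $X'(q,\sigma)$ does its work---the absence of real zeros $\ge\sigma$ feeds an explicit-formula/zero-counting argument bounding the error without assuming GRH---and where most of the technical effort will go; the equidistribution input and the final Fourier analysis are comparatively routine.
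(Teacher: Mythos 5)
Your overall architecture is the same as the paper's: reduce to $\Phi=\psi_x$, approximate $2\log|L(\sigma,\chi)|$ by the truncated Euler sum $\sum_{p\le y}\bigl(g_{\sigma,p}(\chi(p))+g_{\sigma,p}(\chi(p)^{-1})\bigr)$, use orthogonality of characters mod $q$ to replace the character average by the Haar integral over the torus (this is the content of Lemma~\ref{key_lemma_hosoi}), and identify the local factors with those of \cite{matsumoto-umegaki-ijnt}. However, the step you defer --- the unconditional, uniform-in-$q$ approximation of $2\log|L(\sigma,\chi)|$ by $V_y(\chi)$ on average over $\chi$ --- is not a routine appendix but the actual mathematical core for $1/2<\sigma\le 1$, and as stated your version of it has a genuine gap. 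First, membership in $X'(q,\sigma)$ only means $L(z,\chi)$ has no zero $\rho$ with $\Im\rho=0$ and $\Re\rho\ge\sigma$; this makes $\log L(\sigma,\chi)$ \emph{well-defined} by horizontal continuation but gives no quantitative control whatsoever on its size or on the error of an Euler-product truncation. There is no ``explicit-formula/zero-counting argument'' that runs off this hypothesis alone. What the paper actually uses is a full zero-free rectangle $1/2+\delta/16\le\Re s\le 1$, $|\Im s|\le 2\log q$ (Assumption~\ref{ass}), valid for all but $O(q^{A(\sigma)}(\log q)^{14})=o(q)$ characters by Montgomery's zero-density estimate \eqref{montgomery}; inside that rectangle a Borel--Carath\'eodory plus three-circles argument (Lemma~\ref{arg}) gives $\log L(\sigma,\chi)\ll(\log q)^{a(\delta)}$, and only then can one run a smoothed Perron/Mellin argument (Proposition~\ref{mu}) to write $\log L(\sigma,\chi)=\mathcal{G}_{\sigma,P_q(y)}(\bm{\chi})+S_y+o(1)$ with $S_y=\sum_{p>y}\chi(p)e^{-p/q}p^{-\sigma}$. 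Your proposed estimate silently assumes all of this.

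Second, your formulation $\sup_q\frac{1}{|X(q)|}\sum_{\chi}|2\log|L(\sigma,\chi)|-V_y(\chi)|^2\to0$ as $y\to\infty$ with $y$ \emph{fixed before} $q$ is not what one can prove directly: for $\sigma\le1$ the tail of the Euler product is an infinite, non-absolutely-convergent sum, so the off-diagonal terms in the mean square cannot be handled by orthogonality until the tail has been smoothed to an essentially finite Dirichlet polynomial of length comparable to $q$. The paper avoids the $\sup_q$ formulation entirely by coupling $y=\sqrt{\log\log q}$ to $q$, proving a \emph{first}-moment bound $\frac{1}{|X(q)|}\sum_\chi|S_y|\ll(\log\log q)^{-\varepsilon/2}$ via Cauchy--Schwarz and the congruence condition $p_1\equiv p_2\pmod q$ (see \eqref{S_y}), and correspondingly reproving the equidistribution step (Lemma~\ref{key_lemma_hosoi}) with $q$-dependent $y$, which requires the explicit truncation lengths $N_p$ and the condition $N=\prod_{p\le y}p^{N_p}<q$. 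So the route is the right one, but the lemma you label as ``where most of the technical effort will go'' is precisely the theorem's content for $1/2<\sigma\le1$, and the hypothesis you propose to extract it from ($\chi\in X'(q,\sigma)$) is too weak to yield it. A smaller point: the $L^1$ bound on $\widetilde{\mathcal{M}}_\sigma$ does not follow from the decay of a single local factor (stationary phase gives only $|x|^{-1/2}$ per factor); one needs the product over the roughly $|x|^{1/\sigma}$ primes with $p^\sigma\asymp|x|$, as in \cite{matsumoto-umegaki-ijnt}.
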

\begin{remark}
Ihara and Matsumoto \cite{ihara-matsumoto-moscow} discussed this type of average under GRH for $\mathcal{L} (s, \chi)$.
\end{remark}
%
%
%
%
%
\par
The second aim of this paper is to prove Theorem~\ref{main1} below.
Under GRH, Mourtada and Murty~\cite{mourtada-murty}  constructed a density function $\mathcal{Q}_{\sigma}(x)$, 
which
\[
\lim_{Y\to\infty} 
\frac{1}{N(Y)}
\underset{|D|\leq Y}{{\sum}^{\ast}}
\Phi \left(\frac{L'}{L}(\sigma, \chi_D)\right)
=
\int_{\mathbb{R}} \mathcal{Q}_{\sigma}(u) \Phi(u) 
\frac{du}{\sqrt{2\pi}}
\]
holds, where $\chi_D$ is a real character attached to $D$, 
${\sum}^{\ast}$ is the sum over fundamental discriminants
and $N(Y)$ means $\sum_{|D|\leq Y}^{\ast}1$.
Mourtada and Murty calculated the left-hand side of the above equation in the case $\Phi(u)=\psi_a(u)$ for $|a|<R$, where $\psi_a(u)=\exp(iau)$, and they constructed $\mathcal{Q}_{\sigma}(u)$. 
Mourtada and Murty's argument is supported by the results of Ihara and Matsumoto~\cite{ihara-matsumoto-moscow}. 
In this paper, 
we prove the following theorem unconditionally.
%
%
\begin{theorem}\label{main1}
For fixed $\sigma>1/2$, 
there exists a density function $\mathcal{Q}_{\sigma}(u)$ 
such that
 \[
\lim_{Y\to\infty} \frac{1}{N(Y)}
\underset{|D|\leq Y}{{\sum}^{\dag}} 
\Phi \left(\mathcal{L}(\sigma, \chi_D)\right)
=
\int_{\mathbb{R}} 
\mathcal{Q}_{\sigma}(u) \Phi(u) 
\frac{du}{\sqrt{2\pi}},
\]
where ${\sum}^{\dag}$ is the sum over fundamental discriminant $D$ which $L(\sigma', \chi_D)\neq 0$ for $\sigma\leq \sigma'$.
\end{theorem}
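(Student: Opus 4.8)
The plan is to run, for the family of quadratic characters $\chi_D$ ordered by $|D|$, the same argument used for Theorem~\ref{hosoi1} --- namely the strategy of \cite{ihara-matsumoto-QJM} --- and to recognise its output as the density $\mathcal{Q}_\sigma$ constructed (under GRH) by Mourtada and Murty \cite{mourtada-murty}. First I would invoke the standard reduction (see \cite{matsumoto}, and its use in \cite{ihara-matsumoto-QJM,matsumoto-umegaki-ijnt}): it suffices to prove that for every $a\in\mathbb{R}$
\[
\lim_{Y\to\infty}\frac{1}{N(Y)}\underset{|D|\le Y}{{\sum}^{\dag}}\psi_a\bigl(\mathcal{L}(\sigma,\chi_D)\bigr)=\widetilde{\mathcal{Q}_\sigma}(a):=\int_{\mathbb{R}}\mathcal{Q}_\sigma(u)e^{iau}\,\frac{du}{\sqrt{2\pi}},
\]
where $\psi_a(u)=e^{iau}$ and $\mathcal{Q}_\sigma$ is continuous with $\widetilde{\mathcal{Q}_\sigma}\in L^1(\mathbb{R})$; the continuity theorem for characteristic functions then gives the assertion for all bounded continuous $\Phi$, and approximation from above and below extends it to compactly supported Riemann integrable $\Phi$. (The convergence above will in fact be uniform on compact $a$-sets, which is convenient below.)

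Next I would fix the arithmetic model. Since $\chi_D$ is real, $\chi_D(p)\in\{-1,0,1\}$ and, formally, $\mathcal{L}(\sigma,\chi_D)=\sum_p h_{\sigma,p}(\chi_D(p))$ with $h_{\sigma,p}(t)=g_{\sigma,p}(t)=-\log(1-tp^{-\sigma})$ when $\mathcal{L}=\log L$, and $h_{\sigma,p}(t)=-\,(tp^{-\sigma}\log p)/(1-tp^{-\sigma})$ when $\mathcal{L}=L'/L$. Let $\{X_p\}_p$ be independent with $\Pr[X_p=\pm1]=\frac12\frac{p}{p+1}$ and $\Pr[X_p=0]=\frac1{p+1}$ (the limiting law of $\chi_D(p)$ over fundamental discriminants; the obvious modification is made at $p=2$). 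One has $\mathbb{E}\,h_{\sigma,p}(X_p)\ll p^{-2\sigma}\log p$ and $\mathbb{E}\,h_{\sigma,p}(X_p)^2\ll p^{-2\sigma}(\log p)^2$, so, because $2\sigma>1$, the sum $\sum_p h_{\sigma,p}(X_p)$ converges almost surely and in $L^2$; its characteristic function is the absolutely convergent product $\prod_p\mathbb{E}\,\psi_a(h_{\sigma,p}(X_p))$, and --- this is exactly the computation carried out in \cite{mourtada-murty} for $L'/L$, and is entirely analogous for $\log L$ --- this product decays fast enough in $a$ that the law of $\sum_p h_{\sigma,p}(X_p)$ equals $\mathcal{Q}_\sigma(u)\,\frac{du}{\sqrt{2\pi}}$ for a continuous $\mathcal{Q}_\sigma$ with integrable Fourier transform. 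Thus the right-hand side of the display above is precisely $\prod_p\mathbb{E}\,\psi_a(h_{\sigma,p}(X_p))$.

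The core is therefore to show that the empirical average converges to this product, and here I would truncate. For a parameter $x$ put $\mathcal{L}^{(x)}(\sigma,\chi_D)=\sum_{p\le x}h_{\sigma,p}(\chi_D(p))$. \emph{(i) Finite part.} For fixed $x$, expanding $\psi_a(h_{\sigma,p}(t))$ as a finite linear combination of the monomials $t^k=\chi_D(p)^k=\chi_D(p^k)$ reduces $\frac1{N(Y)}\sum_{|D|\le Y}^{*}\psi_a\bigl(\mathcal{L}^{(x)}(\sigma,\chi_D)\bigr)$ to finitely many averages $\frac1{N(Y)}\sum_{|D|\le Y}^{*}\chi_D(n)$ with $n$ composed of primes $\le x$; by the classical evaluation of these --- the limit is $\prod_{p\mid n}\frac p{p+1}$ if $n$ is a perfect square and $0$ otherwise --- this tends, as $Y\to\infty$, to $\prod_{p\le x}\mathbb{E}\,\psi_a(h_{\sigma,p}(X_p))$, which in turn tends to the full product as $x\to\infty$. \emph{(ii) Truncation error.} Since $|\psi_a(u)-\psi_a(v)|\le|a|\,|u-v|$, it remains to show
\[
\limsup_{Y\to\infty}\frac1{N(Y)}\underset{|D|\le Y}{{\sum}^{\dag}}\left|\mathcal{L}(\sigma,\chi_D)-\mathcal{L}^{(x)}(\sigma,\chi_D)\right|\ \longrightarrow\ 0\qquad(x\to\infty).
\]
For this I would first discard, using a zero-density estimate for the family of quadratic $L$-functions (of Jutila, Heath-Brown type), the $o(N(Y))$ values of $D$ with $|D|\le Y$ having a zero with $\Re s\ge\sigma-\delta$ for a small fixed $\delta>0$ --- note that ${\sum}^{\dag}$ by itself only forbids \emph{real} zeros $\ge\sigma$, so this density input is genuinely needed, and the discarded $D$ cost only $o(1)$ in the $\psi_a$-average because $|\psi_a|=1$. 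For each remaining $D$ (which has a zero-free region $\Re s\ge\sigma-\delta$ available), a contour shift of $-\frac1{2\pi i}\int(L'/L)(s,\chi_D)\,(\cdots)\,ds$ to the line $\Re s=\sigma-\delta$ expresses $\mathcal{L}(\sigma,\chi_D)-\mathcal{L}^{(x)}(\sigma,\chi_D)$ as a small explicit main term plus an integral whose $L^1$-average over the family is controlled by a mean-value (large-sieve) bound for $L'/L$ --- and, when $\mathcal{L}=\log L$, for its integral in $\sigma$ --- on $\Re s=\sigma-\delta$; this is the estimate established, in the form needed, in \cite{mourtada-murty}. A diagonal argument (first $x$ large, then $Y\to\infty$), with all estimates uniform in $|a|<R$, then yields the displayed limit, and the first paragraph concludes.

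The step I expect to be the main obstacle is the uniform-in-$Y$ control of the truncation error in (ii): it requires marrying a zero-density estimate for $\{L(s,\chi_D)\}$ with a mean-value bound on the line $\Re s=\sigma-\delta$, and, for $\mathcal{L}=\log L$, that bound must be locally uniform in $\sigma$ so as to survive integration; pinning down an admissible $\delta$ and making the resulting error independent of $Y$ is the delicate point, the rest being either the standard Fourier/approximation machinery of \cite{matsumoto,ihara-matsumoto-QJM} or a routine orthogonality computation.
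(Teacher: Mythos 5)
Your proposal is essentially correct, but it takes the route the paper deliberately does \emph{not} take. The paper proves Theorem~\ref{main1} by the method of \cite{ihara-matsumoto-moscow}: it expands $\psi_x(\mathcal{L}(\sigma,\chi_D))$ itself as a Dirichlet series $\sum_n \lambda_{2x}(n)\chi_D(n)n^{-\sigma}$, smooths with $e^{-n/X}$ via Mellin's formula and a contour shift inside the zero-free rectangle of Assumption~\ref{ass} (Lemma~\ref{asymptotic}), discards the exceptional $D$ by Jutila's zero-density estimate \eqref{jutila}, and then evaluates $\sum_n\lambda_{2x}(n)f_Y(n)n^{-\sigma}$ directly by splitting into square and non-square $n$ via \eqref{sq} and \eqref{non-sq}; the Euler product for $\widetilde{\mathcal{Q}_\sigma}$ only appears afterwards, when proving the decay $\widetilde{\mathcal{Q}_\sigma}(x)\ll\exp(-C_\sigma|x|^{1/(2\sigma)})$ needed for Fourier inversion. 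You instead run the \cite{ihara-matsumoto-QJM}-style argument used for Theorem~\ref{hosoi1}: truncate the Euler product at $p\le x$, compare the finite part with a random model, and control the tail separately. The paper explicitly flags this alternative in Remark~\ref{56}, observing that the Haar measure on the torus is the wrong measure here but that the \cite{ihara-matsumoto-QJM} method could work ``if we find a suitable measure'' --- and your law $\Pr[X_p=\pm1]=\tfrac{1}{2}\tfrac{p}{p+1}$, $\Pr[X_p=0]=\tfrac{1}{p+1}$ is precisely that measure, reproducing the paper's local factor $\widetilde{\mathcal{Q}_{\sigma,p}}(x)=\tfrac{1}{p+1}+\tfrac{p}{2(p+1)}\bigl((1-p^{-\sigma})^{-ix}+(1+p^{-\sigma})^{-ix}\bigr)$. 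Both routes consume the same arithmetic inputs (zero-density for real characters, the evaluation of $f_Y(n^2)$, the mean-square bound for non-square $n$); what your route buys is a transparent probabilistic interpretation of $\mathcal{Q}_\sigma$, at the cost of having to establish the truncation estimate (ii), which is exactly the quadratic-character analogue of the paper's Proposition~\ref{mu} combined with \eqref{S_y}. One caution: you attribute the mean-value bound on the line $\Re s=\sigma-\delta$ to \cite{mourtada-murty}, but their estimates are conditional on GRH; unconditionally you must derive it, as the paper does, from the zero-free rectangle guaranteed after the zero-density discard via the Akbary--Hamieh-type argument of Lemma~\ref{arg}, so that citation should be replaced rather than relied upon. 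With that repair the argument goes through.
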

\begin{remark}
In the proofs of Theorem~\ref{hosoi1} and \ref{main1}, we use Lemma~\ref{arg} and the upper bound \eqref{for_th1} below.
These are obtained by the argument of Lemma~3.6 in Akbary and Hamieh \cite{akbary-hamieh}. 
For the case $\mathcal{L}(\sigma, \chi_D)=L'/L(\sigma, \chi_D)$, Theorem~\ref{main1} menas that it is possible to remove the GRH assumption in Mourtada and Murty's result, as Akbary and Hamieh mentioned in \cite{akbary-hamieh}.
\end{remark} 
%
%
\begin{remark}\label{56}
In this remark, we assume GRH for simplicity.
 Ihara and Matsumoto constructed the density functions for the sum of values $\log L(s,\chi)$ over Dirichlet characters by two ways. One is \cite{ihara-matsumoto-QJM} and the other is \cite{ihara-matsumoto-moscow}.
 For the first aim of this paper, which is Theorem~\ref{hosoi1}, we use the similar argument as one in \cite{ihara-matsumoto-QJM}, 
because the $\mathcal{M}$-function derived from $g_{\sigma, p}(t)+g_{\sigma, p}(t^{-1})$ had been constructed in \cite{matsumoto-umegaki-ijnt} by the method in \cite{ihara-matsumoto-QJM}.
If we use the argument in \cite{ihara-matsumoto-moscow} for the proof of Theorem~\ref{hosoi1}, we need to discuss the Euler product of 
\[
\sum_{\chi\in X(q)}
\psi_a(2\log |L(\sigma, \chi)|)
 =
 \sum_{\chi\in X(q)}
 \exp(ia \log L(\sigma, \chi))
 \exp(ia \log L(\sigma, \overline{\chi}))
\]
and show that many prime factors have good estimate (see Section~\ref{section_proof_main1}).
\par
For the second aim of this paper, which is Theorem~\ref{main1}, we use the argument in \cite{ihara-matsumoto-moscow}. The reason of this is as following.
Ihara and Matsumoto~\cite{ihara-matsumoto-QJM} construct 
the partial $M$-function $\mathcal{M}_{\sigma, P}$ which satisfies
\begin{equation}\label{M=integral}
\int_{\mathbb{C}} \mathcal{M}_{\sigma, P}(w)\Phi(w) |dw|
=\int_{T_P}\Phi(g_{\sigma, P}(\bm{t}_P))d^*\bm{t}_P,
\end{equation}
where $P=P(y)$ is the finite set of prime numbers which are less than or equal to $y>1$, $\bm{t}_P=(t_p) \in T_P=\prod_{p\in P} T_p$ with $T_p=\{t_p \mid |t_p|=1\}$ and $d^* \bm{t}_P$ is the normalized Haar measute on $T_P$. 
Since the construction of $\mathcal{M}_{\sigma, P}$ relates to the integration by substitution for $M_{\sigma, \{p\}}$ and the convolutions of them, so $\mathcal{M}_{\sigma, P}$ does not depend on what kind of the average of Dirichlet characters do we study.
In the case $\Phi=\psi_{z}$ (this case is essential), the idea of Ihara matsumoto \cite{ihara-matsumoto-QJM} is to prove
the existence of the function $\mathcal{M}_{\sigma}(u)$ such that 
\begin{equation}\label{avg=M}
\lim_{\substack{q\to\infty\\ q \text{: prime}}}
\frac{1}{q-2}
\sum_{\chi\in X(q)}
 \Phi(\log L(s, \chi))
=
\int_{\mathbb{C}} \mathcal{M}_{\sigma}(w)\Phi(w) |dw|
\end{equation}
by showing that the left-hand side of \eqref{avg=M} and the right hand side of \eqref{M=integral} as $y\to\infty$ are the same.
Since $\mathcal{M}_{\sigma, P}$ in \eqref{M=integral} is constructed independently of the average, we may say that the average which is the left-hand side of \eqref{avg=M} affects what kind of measure which is in the right-hand side of \eqref{M=integral} is suitable here.
On the second aim, our target is the average of values $\log L(\sigma, \chi_D)$ over real characters attached to fundamental discriminants $D$ with $|D|\leq Y$.
Even the simplest case which $\Psi(u)=u$ and $s=\sigma>1$, our target is not same as the left hand side of \eqref{avg=M}.
In fact, we can see the main terms of
\[
\frac{1}{q-1}
\sum_{\chi\in X(q)}
 \log L(\sigma, \chi)
\quad
\mathrm{and}
\quad
\frac{1}{N(Y)}
\underset{|D|\leq Y}{{\sum}^{\ast}} 
\log L(\sigma, \chi_D)
\]
are different, because the main term of the former summation comes form the $n$th Dirichlet coefficients of $\log L(s,\chi) $ with $n\equiv 1\pmod{q}$ and the main term of the latter summation comes form the $n$th Dirichlet coefficients of $\log L(\sigma, \chi_D)$ which $n$ is square.
Then naturally we think that the density function for the distribution  of values $\log L(\sigma, \chi_D)$ is not similar to $\mathcal{M}_{\sigma}$ in \eqref{avg=M}.
Therefore we give the proof of Theorem~\ref{main1} by the method in \cite{ihara-matsumoto-moscow}.
\par
We may be able to discuss $M$-functions by the method in \cite{ihara-matsumoto-QJM} if we find a suitable measure. 
For example we can construct an $M$-function of the value-distribution of symmetric power $L$-functions by the method in \cite{ihara-matsumoto-QJM} and the Sato-Tate measure (see\cite{LMMU}).
\end{remark} 
%
%
%
\subsection*{Acknowledgement}
The authors would like to thank to professor Kohji Matsumoto for his comments.
The authors would also like to our deep appreciation to professor Masahiro Mine for his comments and suggestions.
%
%
%
%
\section{Preparation}
At first, we prepare the following assumption. 
\begin{assumption}\label{ass}
For $\varepsilon_0>0$, let $Y$ be a real number with $Y\geq\max\{e^{1/\varepsilon_0}, e^2\}$.
Let $\delta$ be a positive number less than $1$.
We assume that $L(s,\chi)\neq 0$
in the domain $\mathscr{D}$ which is a rectangle defined by
$1/2+\delta/16\leq \Re(s) \leq 1$ and $|\Im(s)|\leq 2\log Y$. 
\end{assumption}
Let $X_{\mathscr{D}}(q)$ be the set of primitive Dirichlet characters of conductor $q$ which Assumption~\ref{ass} holds.
In the proofs of Theorem~\ref{hosoi1} and \ref{main1}, we use the estimates of $|\psi_x(\mathcal{L}(s,\chi))|$ and $|\mathcal{L}(s,\chi)|$ in Lemma~\ref{arg} and \eqref{for_th1} for $\chi\in X_{\mathscr{D}}(q)$, and we use \eqref{montgomery} or \eqref{jutila} for $\chi\notin X_{\mathscr{D}}(q)$.
Let $N(\sigma, T, \chi)$ be the number of zeros $\rho$ of $L(s, \chi)$ with $\Re(\rho)\geq \sigma$ and $|\Im s|\leq T$.
We know the zero-density estimate 
\begin{equation}\label{montgomery}
\sum_{\chi\in X(q)} N(\sigma, T, \chi) 
\ll
(qT)^{A(\sigma)}(\log qT)^{14},
\end{equation}
where $A(\sigma)<1$ for $1/2<\sigma$ (see Theorem~12.1 in Montogomery~\cite{montogomery_book}). 
By Jutila~\cite{jutila}, we know
\begin{equation}\label{jutila}
\sum_{\chi\in S(Y)} N(\sigma, T, \chi)
\ll_{\varepsilon} (YT)^{(7-6\sigma)/(6-4\sigma)+\varepsilon},
\end{equation}
where $S(Y)$ is the set of all real primitive characters of conductor at most $Y$.
By these estimates, we can see an upper bound of the number of characters which Assumption~\ref{ass} does not hold.
\par
In this section, we prove 
\begin{lemma}\label{arg}
Suppose $0<\delta<1$ and $Y\geq\max\{e^{1/\varepsilon_0}, e^2\}$ for any positive constant $\varepsilon_0>0$. 
Let $\chi$ be a primitive Dirichlet character of conductor $q\leq Y$ (here, $q$ does not need to be a prime number).
Under Assumption~\ref{ass}, for $s=\sigma+it$ with $(1+\delta)/2\leq \sigma \leq 1$ and $|t|\leq \log Y$, there exist positive constants $c(\delta)$ and $a(\delta)$  $(0< a(\delta) <1 )$ depending on $\delta$, such that
\[
|\psi_x(\mathcal{L}(\sigma +it , \chi))|
\leq
\exp\big(R c(\delta)\varepsilon_0^{(1-a(\delta))}\log Y\big),
\]
where $|x|<R$ for any $R>0$.
\end{lemma}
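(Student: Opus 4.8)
\emph{Plan.} I would bound $|\mathcal{L}(\sigma+it,\chi)|$ directly, since $|\psi_x(w)|\le\exp(|x|\,|w|)\le\exp(R|w|)$ whenever $|x|<R$ (this holds both for $\psi_x(w)=\exp(ixw)$ and for $\psi_x(w)=\exp(\tfrac{i}{2}(x\overline w+xw))$), so that it suffices to produce constants $c(\delta)>0$ and $a(\delta)\in(0,1)$ with
\[
|\mathcal{L}(\sigma+it,\chi)|\le c(\delta)\,\varepsilon_0^{\,1-a(\delta)}\log Y
\qquad\bigl((1+\delta)/2\le\sigma\le1,\ |t|\le\log Y\bigr).
\]
The structural fact driving everything is that, by Assumption~\ref{ass}, every zero $\rho$ of $L(s,\chi)$ with $|\Im(\rho)|\le2\log Y$ satisfies $\Re(\rho)<1/2+\delta/16$; the proof then anchors $\mathcal{L}$ in the region $\Re(s)>1$, where it is $O(1)$, and propagates this control leftwards into the zero-free box.

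\emph{Step 1 (a crude bound on a box).} On $\mathscr{B}=\{\,s:1/2+\delta/8\le\Re(s)\le3/2,\ |\Im(s)|\le\tfrac32\log Y\,\}$, I would use the standard partial fraction expansion $\frac{L'}{L}(s,\chi)=\sum_{|\Im(\rho)-\Im(s)|\le1}(s-\rho)^{-1}+O(\log(q(|\Im(s)|+2)))$ (valid for $-1\le\Re(s)\le2$) together with the fact that $O(\log(q(|\Im(s)|+2)))$ zeros $\rho$ satisfy $|\Im(\rho)-\Im(s)|\le1$. For $s\in\mathscr{B}$ and $Y\ge e^2$ each such $\rho$ has $|\Im(\rho)|<2\log Y$, hence $\Re(\rho)<1/2+\delta/16$ by Assumption~\ref{ass} and $|s-\rho|\ge\Re(s)-\Re(\rho)>\delta/16$; since $q\le Y$ this gives $|L'/L(s,\chi)|\ll\delta^{-1}\log Y$ on $\mathscr{B}$. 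Integrating $L'/L$ along the horizontal segment from $3/2+i\Im(s)$ to $s$, where $|\log L(3/2+i\Im(s),\chi)|=O(1)$ by absolute convergence of the Euler product, yields $|\log L(s,\chi)|\ll\delta^{-1}\log Y$ on $\mathscr{B}$ as well (no primality of $q$ is used).

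\emph{Step 2 (propagation via the two--constants theorem).} Fix $t$ with $|t|\le\log Y$ and set $\mathscr{R}=[\,1/2+\delta/8,\,3/2\,]\times[\,t-\tfrac12\log Y,\,t+\tfrac12\log Y\,]\subseteq\mathscr{B}$; then $\log L(\cdot,\chi)$ is holomorphic on a neighbourhood of $\overline{\mathscr{R}}$, with $|\log L(s,\chi)|\le c_2$ (absolute) on the right edge $\Re(s)=3/2$ and $|\log L(s,\chi)|\le M:=c_1\delta^{-1}\log Y$ on the other three edges by Step~1. Since $\mathscr{R}$ has width $1-\delta/8\asymp1$ but height $\log Y$, the harmonic measure $\omega(z)$ of the right edge at any $z$ within distance $\delta/8$ of $\sigma+it$ obeys $\omega(z)\ge(\Re(z)-1/2-\delta/8)/(1-\delta/8)-O(Y^{-c})\gg\delta$; here one compares with the harmonic measure in the strip $1/2+\delta/8\le\Re(s)\le3/2$, whose top and bottom edges contribute an amount exponentially small near the mid-height of $\mathscr{R}$. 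The two--constants theorem then gives $|\log L(z,\chi)|\le c_2^{\,\omega(z)}M^{1-\omega(z)}\ll_\delta(\log Y)^{a(\delta)}$ for all such $z$, where $a(\delta):=1-c\delta\in(0,1)$ for a suitable $c>0$. In particular $|\log L(\sigma+it,\chi)|\ll_\delta(\log Y)^{a(\delta)}$, and Cauchy's estimate on the disc of radius $\delta/8$ about $\sigma+it$ gives $|L'/L(\sigma+it,\chi)|\ll_\delta\delta^{-1}(\log Y)^{a(\delta)}$; after enlarging $a(\delta)$ and $c(\delta)$ if necessary, $|\mathcal{L}(\sigma+it,\chi)|\le c(\delta)(\log Y)^{a(\delta)}$ in both cases.

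\emph{Step 3 and the main obstacle.} Finally $Y\ge e^{1/\varepsilon_0}$ forces $\varepsilon_0\ge1/\log Y$, so $(\log Y)^{a(\delta)}=(\log Y)^{-(1-a(\delta))}\log Y\le\varepsilon_0^{\,1-a(\delta)}\log Y$ because $1-a(\delta)>0$; combining this with Step~2 and the reduction above finishes the proof. The main obstacle is Step~2: one must arrange the geometry so that the ``good'' edge $\Re(s)=3/2$ is seen from the target point with harmonic measure bounded below by a quantity depending on $\delta$ alone, and this is precisely where the gap between the hypothesis $|\Im(s)|\le2\log Y$ in Assumption~\ref{ass} and the restriction $|t|\le\log Y$ in the conclusion is indispensable, since it supplies a tall, thin zero-free rectangle around $\sigma+it$. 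The remaining ingredients---the partial fraction formula, the zero-counting bound, and the two--constants theorem---are classical.
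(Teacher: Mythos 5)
Your proposal is correct, and it reaches the same conclusion by a parallel but technically different route. Both arguments have the same skeleton: first a crude bound $|\log L(s,\chi)|\ll_{\delta}\log Y$ on a region reaching down to $\Re(s)=1/2+\delta/8$ (this is where Assumption~\ref{ass} is needed to make $\log L$ single-valued and controlled), then an interpolation against the $O(1)$ bound in the half-plane of absolute convergence to win the exponent $a(\delta)<1$, and finally the observation $(\log Y)^{a}\le\varepsilon_0^{1-a}\log Y$ from $Y\ge e^{1/\varepsilon_0}$, which is identical in both proofs. The tools differ at the first two stages. For the crude bound the paper uses only the convexity estimate $\Re\log L\ll\log q(|t|+1)$ together with the Borel--Carath\'eodory theorem on discs centred at $2+it$, whereas you invoke the partial-fraction expansion of $L'/L$ and the zero-repulsion $|s-\rho|>\delta/16$ coming from Assumption~\ref{ass}; your version uses the zero-free hypothesis more actively but needs the explicit formula and (implicitly) $q>1$ so that there is no pole at $s=1$. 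For the interpolation the paper applies Hadamard's three-circle theorem on circles of radii $r_3<2-\sigma<r_1$ about $2+it$, which yields a completely explicit $a(\delta)=(\log r_2-\log r_3)/(\log r_1-\log r_3)$, while you apply the two-constants theorem on a tall rectangle; that is the same principle (three circles is the two-constants theorem for an annulus), but your $a(\delta)=1-c\delta$ rests on a quantitative lower bound for the harmonic measure of the right edge, including the exponentially small contribution of the top and bottom edges --- standard, but it is the one step you should write out rather than assert. A last cosmetic difference: the paper gets the $L'/L$ case by running Borel--Carath\'eodory and three circles for $\mathcal{H}'$ simultaneously, while you recover it from the $\log L$ bound by Cauchy's estimate on a disc of radius $\delta/8$; both are fine since that disc stays inside the region where your bound holds.
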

\begin{proof}
The argument in this proof is similar to the proof of Lemma~3.6 in Akbary and Hamieh's work \cite{akbary-hamieh}.
Let $s_0=2+it$, $r_0=3/2-\delta/16$ and 
\[
\mathscr{D}_0=\bigcup_{|t|\leq \log Y}\{s\in\mathbb{C}\mid |s-s_0|\leq r_0\}
\]  
which is included in $\mathscr{D}$.
In $\mathscr{D}_0$, we put
\[
\mathcal{H}(z)=\log L(z+s_0, \chi)-\log L(s_0, \chi)
\]
and consider the circles with radii $r_j$ $(0\leq j \leq 3)$ whose center is $s_0$, where $r_1=3/2-\delta/8$, $r_2=3/2-\delta/4$ and $r_3=1-\delta$. Here we know $1/2<r_3<2-\sigma<r_2<r_1<r_0$.
For $|z|\leq r_0$, we know there exists an absolute constant $C$ such that
\begin{equation}\label{trivial_logL}
\Re(\mathcal{H}(z))=\log |L(z+s_0)| -\log |L(s_0, \chi)|
\leq C\log q(|t|+1)
\end{equation}
by the trivial bound of the Dirichlet $L$-function which is
\[
L(s, \chi) \ll (q(1+|t|))^{(1+\varepsilon-\sigma)/2}
\]
for $-\varepsilon \leq \sigma \leq 1+\varepsilon$, where $0<\varepsilon <1/2$.
By \eqref{trivial_logL} and the Borel Carath{\' e}odory theorem (see Lemma~6.2 in \cite{montgomery-varghan}), we have
\[
|\mathcal{H}(z)| 
\leq \frac{2r_1 C\log q(|t|+1)}{r_0-r_1}
\ll \frac{\log q(|t|+1)}{\delta}
\]
and
\[
|\mathcal{H}'(z)| 
\leq \frac{2r_1 C\log q(|t|+1)}{(r_0-r_1)^2}
\ll \frac{\log q(|t|+1)}{\delta^2}
\]
for $|z|\leq r_1<r_0$. 
For $|z|=r_3$, we see $|\mathcal{H}(z)| \ll_{\delta} 1$ and $|\mathcal{H}'(z)|\ll_{\delta} 1$.
Let $\mathscr{H}(z)=\mathcal{H}(z)$ or $\mathcal{H}'(z)$. 
By Hadamard's three-circle theorem, 
we have
\[
\max_{|z|=2-\sigma}|\mathscr{H}(z)|
\leq
\bigg(\max_{|z|=r_3} |\mathscr{H}(z)|\bigg)^{\theta}
\bigg(\max_{|z|=r_1} |\mathscr{H}(z)|\bigg)^{1-\theta},
\]
where
\[
\theta=\frac{\log r_1 -\log(2-\sigma)}{\log r_1-\log r_3}.
\]
The upper bounds of $\mathscr{H}(z)$ which mentioned above yields
\[
\max_{|z|=2-\sigma}|\mathscr{H}(z)|
\ll_{\delta} (\log q(|t|+1))^{1-\theta}
\ll_{\delta} (\log Y)^{1-\theta}.
\]
Since
\[
1-\theta
=
\frac{\log(2-\sigma) -\log r_3}{\log r_1-\log r_3}
<
\frac{\log r_2 -\log r_3}{\log r_1-\log r_3},
\]
we put $a(\delta)=(\log r_2-\log r_3)/(\log r_1 -\log r_3)$ and we see $0<a(\delta)<1$.
Considering the case $z=-2+\sigma$ yields that there exists $c(\delta)>0$ such that
\[
|\mathcal{L}(s,\chi)|
\leq c(\delta)(\log Y)^{a(\delta)}.
\]
Since $Y>e^{1/\varepsilon_0}$,  
we see
\begin{equation}\label{for_th1}
|\mathcal{L}(s,\chi)|
\leq 
c(\delta)(\log Y)^{a(\delta)}
<
c(\delta) \varepsilon_0^{1-a(\delta)} \log Y.
\end{equation}
Hence we obtain
\begin{align*}
|\psi_x(\mathcal{L}(s, \chi))|
=&
\exp(-x\arg \mathcal{L}(s,\chi))
\leq
\exp(|x \mathcal{L}(s,\chi)|)
\\
<&
\exp(R c(\delta) \varepsilon_0^{1-a(\delta)} \log Y).
\end{align*}
\end{proof}
%
%
%
%
\section{The preparation of the proof of Theorem~\ref{hosoi1}}
\par
Let $q$ be a prime number and $P_q$ a finite set of prime numbers except for $q$. 
We denote by $P_q(y)$ a finite set of prime numbers which are less than or equal to $y$ except for $q$.
For fixed $\sigma>1/2$, 
we consider the function 
\[
g_{\sigma, p}(t_p)=-\log (1-t_p p^{-\sigma})
\]
on $T=T_p=\{t_p\in\mathbb{C} : |t_p|=1\}$ and we define 
\[
\mathcal{G}_{\sigma, P_q}(\bm{t}_{P_q})=\sum_{p\in P_q} g_{\sigma, p} (t_p)
\]
of $\bm{t}_P=(t_p)_{p\in P_q}$ on $T_{P_q}=\prod_{p\in P_q} T$.
Let $\chi$ be a Dirichlet character of modulus $q$ and put $\bm{\chi}_{P_q(y)}=(\chi(p))_{p\in P_q(y)}$.
For $\sigma>1$, we know 
\[
\lim_{y\to\infty} \mathcal{G}_{\sigma, P_q(y)}(\bm{\chi}_{P_q(y)})
=\log L(\sigma,\chi).
\]
\begin{proposition}[Proposition 3.1, \cite{matsumoto-umegaki-ijnt}]\label{matsumoto-umegaki-ijnt_prop1}
For any $\sigma>0$, there exists a non-negative function $M_{\sigma, P_q}$ defined on $\mathbb{R}$ which satisfies following two properties.
\begin{itemize}
\item The support of $M_{\sigma, P_q}$ is compact.
\item For any continuous function $\Psi$ on $\mathbb{R}$, we have
\[
\int_{T_{P_q}}\Psi(2\Re(\mathcal{G}_{\sigma, P_q}(\bm{t}_{P_q}))) d^{\ast}\bm{t}_{P_q}
=
\int_{\mathbb{R}} M_{\sigma, P_q}(u)\Psi(u)\frac{du}{\sqrt{2\pi}},
\]
where $d^{\ast}\bm{t}_{P_q}$ is the normalized Haar measure of $T_{P_q}$.
In particular, taking $\Psi\equiv 1$, we have
\[
\int_{\mathbb{R}} M_{\sigma, P_q}(u)\frac{du}{\sqrt{2\pi}}=1.
\]
\end{itemize}
\end{proposition}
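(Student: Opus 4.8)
The plan is to recognize the left-hand side as the integral of $\Psi$ against the pushforward, under the map $F_{\sigma}\colon T_{P_q}\to\mathbb{R}$ defined by $F_{\sigma}:=2\Re(\mathcal{G}_{\sigma,P_q})$, of the normalized Haar measure $d^{\ast}\bm{t}_{P_q}$, and then to show that this pushforward measure $\mu_{\sigma,P_q}$ is compactly supported and absolutely continuous with a non-negative density in $L^1(\mathbb{R})$. Put $f_{\sigma,p}(t_p):=2\Re(g_{\sigma,p}(t_p))=-2\log|1-t_pp^{-\sigma}|$, so that $F_{\sigma}(\bm{t}_{P_q})=\sum_{p\in P_q}f_{\sigma,p}(t_p)$, and note that $d^{\ast}\bm{t}_{P_q}=\prod_{p\in P_q}d^{\ast}t_p$ is a product measure. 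Applying Fubini's theorem to $\int_{T_{P_q}}\Psi\big(\sum_{p\in P_q}f_{\sigma,p}(t_p)\big)\prod_{p\in P_q}d^{\ast}t_p$ then shows that $\mu_{\sigma,P_q}$ is the additive convolution $\nu_{\sigma,p_1}\ast\cdots\ast\nu_{\sigma,p_k}$ (where $P_q=\{p_1,\dots,p_k\}$) of the single-prime pushforwards $\nu_{\sigma,p}:=(f_{\sigma,p})_{\ast}(d^{\ast}t_p)$ on $\mathbb{R}$.

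Compact support is immediate: since $\sigma>0$ and $p\geq2$, we have $0<p^{-\sigma}<1$, hence $1-p^{-\sigma}\leq|1-t_pp^{-\sigma}|\leq1+p^{-\sigma}$ for all $t_p\in T$, so each $f_{\sigma,p}$ is a bounded function on $T$; therefore $F_{\sigma}$ is bounded on $T_{P_q}$ and $\mu_{\sigma,P_q}$ is supported on the fixed compact interval $I:=\big[-2\sum_{p\in P_q}\log(1+p^{-\sigma}),\,-2\sum_{p\in P_q}\log(1-p^{-\sigma})\big]$.

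The substantive step is absolute continuity. Because the convolution of an $L^1$ density with a finite positive measure is again the density of an absolutely continuous measure, and is non-negative when both factors are, it suffices to prove that one single factor $\nu_{\sigma,p}$ is absolutely continuous with density in $L^1(\mathbb{R})$. Writing $t_p=e^{i\theta}$ gives $f_{\sigma,p}(e^{i\theta})=-\log(1-2p^{-\sigma}\cos\theta+p^{-2\sigma})$, which depends on $\theta$ only through $\cos\theta$; it is strictly monotone on $(0,\pi)$ and has critical points only at $\theta=0,\pi$. On each monotone branch one inverts explicitly, $\cos\theta=(1+p^{-2\sigma}-e^{-u})/(2p^{-\sigma})$, and the change of variables produces, for $u$ in the interior of $\operatorname{supp}(\nu_{\sigma,p})$, a density proportional to $e^{-u}\big/\big(p^{-\sigma}\,|\sin\theta(u)|\big)$. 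Since $e^{-u}$ is bounded on that interval and $|\sin\theta(u)|$ vanishes like a square root at the two endpoints, the density is smooth in the interior and has at worst an inverse-square-root — hence integrable — singularity at each endpoint. Thus $\nu_{\sigma,p}$, and consequently $\mu_{\sigma,P_q}$, has a non-negative density $h\in L^1(\mathbb{R})$ supported in $I$; setting $M_{\sigma,P_q}:=\sqrt{2\pi}\,h$ yields the asserted identity for every continuous $\Psi$ (indeed for every bounded Borel $\Psi$), and the choice $\Psi\equiv1$ gives $\int_{\mathbb{R}}M_{\sigma,P_q}(u)\,du/\sqrt{2\pi}=\mu_{\sigma,P_q}(\mathbb{R})=1$.

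The main obstacle is precisely the integrability of this single-prime density near its two critical values: the mere smoothness of $f_{\sigma,p}$ would not suffice (a smooth function that is constant on a sub-arc has a pushforward with an atom), so one genuinely uses the explicit inversion above, or, more robustly, the fact that a non-constant real-analytic function on the circle has only finitely many critical points, each of finite order, whence near each critical value $u_0$ the pushforward density is dominated by an integrable power $|u-u_0|^{-1+1/k}$. Everything else — the convolution identity, non-negativity, and the normalization — is routine.
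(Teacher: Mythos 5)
Your argument is correct and is essentially the construction used in the cited source (the paper itself imports this proposition from Matsumoto--Umegaki without proof): one realizes the left-hand side as the pushforward of Haar measure, computes the single-prime density explicitly via $\cos\theta=(1+p^{-2\sigma}-e^{-u})/(2p^{-\sigma})$ with an integrable inverse-square-root singularity at the two critical values, and obtains $M_{\sigma,P_q}$ as the convolution of these compactly supported $L^1$ densities. Your remark that mere smoothness would not suffice, and that one needs the explicit inversion (or finiteness of the order of the critical points), correctly identifies the one genuinely non-routine point.
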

Matsumoto and the second author~\cite{matsumoto-umegaki-ijnt} studied the properties of the Fourier transform of $M_{\sigma, P_q(y)}(u)$ which is defined by
\[
\widetilde{\mathcal{M}}_{\sigma, P_q(y)}(x)
=
\int_{\mathbb{R}} M_{\sigma, P_q(y)}(u)\psi_x(u)\frac{du}{\sqrt{2\pi}}.
\]
Since they showed $M_{\sigma, P_q(y)}$ and $\widetilde{\mathcal{M}}_{\sigma, P_q(y)}$ are in $L^1$, we see
\[
M_{\sigma, P_q(y)}(u)
=\int_{\mathbb{R}} \widetilde{\mathcal{M}}_{\sigma, P_q(y)}(x)\psi_{-u}(x) \frac{dx}{\sqrt{2\pi}}, 
\]
almost everywhere.
We define $\mathcal{M}_{\sigma, P_q(y)}(u)$ by the right-hand side of the above equation. 
We can see $\widetilde{\mathcal{M}}_{\sigma, P_q(y)}$ is in $L^t$ $(1\leq t \leq \infty)$ by \cite{matsumoto-umegaki-ijnt}.
Therefore we know $\mathcal{M}_{\sigma, P_q(y)}(u)$ is a continuous function for which the Fourier inversion formula holds, and $M_{\sigma, P_q(y)}=\mathcal{M}_{\sigma, P_q(y)}$ almost everywhere.
The existence of 
\[
\widetilde{\mathcal{M}}_{\sigma}(x)
=
\lim_{y\to\infty}\widetilde{\mathcal{M}}_{\sigma, P_q(y)}(x)
\]
is also proved in \cite{matsumoto-umegaki-ijnt}.
We define
\[
\mathcal{M}_{\sigma}(u)
=
\int_{\mathbb{R}}\widetilde{\mathcal{M}}_{\sigma}(x)\psi_{-u}(x)\frac{dx}{\sqrt{2\pi}}.
\]
\begin{remark}
  Matsumoto and the second author did not distinguish between  $M_{\sigma, P_q(y)}$ and $\mathcal{M}_{\sigma, P_q(y)}$ in \cite{matsumoto-umegaki-ijnt}.
They considered  $M_{\sigma, P_q(y)}$ as $\mathcal{M}_{\sigma, P_q(y)}$ by Fourier inversion formula.
However, we only know $M_{\sigma, P_q(y)}(u)=\mathcal{M}_{\sigma, P_q(y)}(u)$ almost everywhere.
But we can obtain the following Proposition~\ref{matsumoto-umegaki-ijnt_prop3}, because $\mathcal{M}_{\sigma, P_q(y)}$ is continuous.
\end{remark}
We have
\begin{proposition}[Proposition 3.3, \cite{matsumoto-umegaki-ijnt}]\label{matsumoto-umegaki-ijnt_prop3}
For $\sigma>1/2$, we have
\begin{itemize}
\item $\displaystyle \lim_{y\to\infty}\mathcal{M}_{\sigma, P_q(y)}(u)=\mathcal{M}_{\sigma}(u)$. The convergence is uniform in $u$.
\item The function $\mathcal{M}_{\sigma}(u)$ is continuous. And $\mathcal{M}_{\sigma} (u)$ is non-negative.
\item $\displaystyle \lim_{u\to\infty}\mathcal{M}_{\sigma}(u)=0$.
\item $\mathcal{M}_{\sigma}(u)$ and $\widetilde{\mathcal{M}}_{\sigma}(x)$ are Fourier duals of each other.
\item $\displaystyle\int_{\mathbb{R}}\mathcal{M}_{\sigma}(u) \frac{du}{\sqrt{2\pi}}=1$.
\end{itemize}
\end{proposition}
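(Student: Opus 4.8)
The plan is to reduce all five assertions to Fourier-analytic consequences of a single estimate: an $L^1$-bound on the functions $\widetilde{\mathcal{M}}_{\sigma, P_q(y)}$ that is uniform in $y$. Taking $\Psi=\psi_x$ in Proposition~\ref{matsumoto-umegaki-ijnt_prop1} and using that $d^\ast\bm{t}_{P_q(y)}$ is a product measure while $2\Re(\mathcal{G}_{\sigma, P_q(y)})=\sum_{p\in P_q(y)}2\Re(g_{\sigma,p}(t_p))$ splits accordingly, I would first record the factorization
\[
\widetilde{\mathcal{M}}_{\sigma, P_q(y)}(x)=\prod_{p\in P_q(y)}\widetilde{\mathcal{M}}_{\sigma,\{p\}}(x),
\qquad
\widetilde{\mathcal{M}}_{\sigma,\{p\}}(x)=\int_T \exp\!\big(ix\cdot 2\Re(g_{\sigma,p}(t_p))\big)\, d^\ast t_p .
\]
Each factor has modulus at most $1$. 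Writing $t_p=e^{i\theta}$ and $r=p^{-\sigma}$, the phase $2\Re(g_{\sigma,p}(t_p))=-\log(1-2r\cos\theta+r^2)$ is a smooth $2\pi$-periodic function whose only stationary points $\theta\in\{0,\pi\}$ are non-degenerate, so a stationary-phase estimate gives $|\widetilde{\mathcal{M}}_{\sigma,\{p\}}(x)|\ll_p |x|^{-1/2}$ as $|x|\to\infty$.

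Fix three primes $p_1,p_2,p_3$ different from $q$ and set $\Phi_0(x)=\prod_{j=1}^{3}|\widetilde{\mathcal{M}}_{\sigma,\{p_j\}}(x)|$. By the single-prime bound $\Phi_0(x)\ll|x|^{-3/2}$, hence $\Phi_0\in L^1(\mathbb{R})$; and since the remaining factors have modulus $\le1$, we obtain the uniform domination $|\widetilde{\mathcal{M}}_{\sigma, P_q(y)}(x)|\le\Phi_0(x)$ for all $y$ with $p_1,p_2,p_3\in P_q(y)$. Because $\sum_p\big(1-\widetilde{\mathcal{M}}_{\sigma,\{p\}}(x)\big)=\sum_p O(x^2 p^{-2\sigma})$ converges locally uniformly for $\sigma>1/2$, the product converges locally uniformly, so the granted limit $\widetilde{\mathcal{M}}_\sigma$ is continuous; letting $y\to\infty$ gives $|\widetilde{\mathcal{M}}_\sigma|\le\Phi_0\in L^1$, and dominated convergence upgrades the pointwise convergence to $\widetilde{\mathcal{M}}_{\sigma, P_q(y)}\to\widetilde{\mathcal{M}}_\sigma$ in $L^1(\mathbb{R})$.

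The first three items now follow quickly. For the uniform convergence I would bound, uniformly in $u$,
\[
\big|\mathcal{M}_{\sigma, P_q(y)}(u)-\mathcal{M}_\sigma(u)\big|
\le\frac{1}{\sqrt{2\pi}}\int_{\mathbb{R}}\big|\widetilde{\mathcal{M}}_{\sigma, P_q(y)}(x)-\widetilde{\mathcal{M}}_\sigma(x)\big|\,dx,
\]
whose right-hand side tends to $0$ by the $L^1$-convergence just established. Continuity of $\mathcal{M}_\sigma$ follows as a uniform limit of the continuous functions $\mathcal{M}_{\sigma, P_q(y)}$ (equivalently, $\mathcal{M}_\sigma$ is the inverse transform of $\widetilde{\mathcal{M}}_\sigma\in L^1$); non-negativity passes to the limit from $\mathcal{M}_{\sigma, P_q(y)}=M_{\sigma, P_q(y)}\ge0$; and the Riemann--Lebesgue lemma applied to $\widetilde{\mathcal{M}}_\sigma\in L^1$ yields $\mathcal{M}_\sigma(u)\to0$ as $u\to\infty$.

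For the mass and duality I would regard $d\mu_y:=M_{\sigma, P_q(y)}(u)\,\frac{du}{\sqrt{2\pi}}$ as probability measures: their total mass is $1$ by Proposition~\ref{matsumoto-umegaki-ijnt_prop1}, and their characteristic functions are exactly $\widetilde{\mathcal{M}}_{\sigma, P_q(y)}$, with $\widetilde{\mathcal{M}}_{\sigma, P_q(y)}(0)=1$. Since $\widetilde{\mathcal{M}}_{\sigma, P_q(y)}\to\widetilde{\mathcal{M}}_\sigma$ pointwise and $\widetilde{\mathcal{M}}_\sigma$ is continuous at $0$, L\'evy's continuity theorem shows $\mu_y$ converges weakly to a probability measure $\mu$ with characteristic function $\widetilde{\mathcal{M}}_\sigma$; as $\widetilde{\mathcal{M}}_\sigma\in L^1$, Fourier inversion identifies $\mu$ as absolutely continuous with density $\mathcal{M}_\sigma(u)/\sqrt{2\pi}$. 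Hence $\int_{\mathbb{R}}\mathcal{M}_\sigma(u)\,\frac{du}{\sqrt{2\pi}}=\mu(\mathbb{R})=1$, and reading the transform at general $x$ gives $\widetilde{\mathcal{M}}_\sigma(x)=\int_{\mathbb{R}}\mathcal{M}_\sigma(u)\psi_x(u)\,\frac{du}{\sqrt{2\pi}}$, so $\mathcal{M}_\sigma$ and $\widetilde{\mathcal{M}}_\sigma$ are Fourier duals. The only substantial obstacle is the single-prime decay $|\widetilde{\mathcal{M}}_{\sigma,\{p\}}(x)|\ll_p|x|^{-1/2}$; once it delivers the uniform integrable majorant $\Phi_0$, everything else is dominated convergence, L\'evy's theorem and Riemann--Lebesgue.
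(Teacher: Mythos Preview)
The paper does not supply a proof of this proposition; it is quoted verbatim as Proposition~3.3 of \cite{matsumoto-umegaki-ijnt} and used as a black box. So there is no in-paper argument to compare against. Your outline is correct and is essentially the standard Jessen--Wintner/Ihara--Matsumoto approach that the cited reference follows: factorize $\widetilde{\mathcal{M}}_{\sigma,P_q(y)}$ over primes, get $|x|^{-1/2}$ decay for each local factor by stationary phase on $\theta\mapsto-\log(1-2r\cos\theta+r^2)$ (whose only critical points $\theta=0,\pi$ are non-degenerate since $0<r<1$), multiply three local factors to manufacture an integrable dominating function, and then let dominated convergence, Riemann--Lebesgue and L\'evy's continuity theorem do the rest.

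Two small points worth tightening. First, the paper explicitly warns that $M_{\sigma,P_q(y)}$ and $\mathcal{M}_{\sigma,P_q(y)}$ coincide only almost everywhere, so in the non-negativity step you should add the one-line observation that a \emph{continuous} function equal a.e.\ to a non-negative function is non-negative everywhere. Second, the local-uniform convergence of the infinite product needs the quadratic estimate $1-\widetilde{\mathcal{M}}_{\sigma,\{p\}}(x)=O(x^{2}p^{-2\sigma})$ rather than the cruder $O(|x|p^{-\sigma})$, since $\sum_p p^{-\sigma}$ diverges for $\sigma\le1$; this in turn relies on $\int_T 2\Re(g_{\sigma,p}(t_p))\,d^\ast t_p=0$, which you use implicitly but might state. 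With these clarifications your plan matches the argument in \cite{matsumoto-umegaki-ijnt}.
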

%
%
%
%
\par
For the proof of Theorem~\ref{hosoi1},
we show the following lemma.
\begin{lemma}\label{key_lemma_hosoi}
For $\sigma>1$, let $y>2$ be a real number which is not depend on $q$.  For $1\geq \sigma > 1/2$, let $y=\sqrt{\log\log q}$ for large $q$. Then we have
\begin{align*}
&
\lim_{\substack{q\to\infty\\ q\text{: prime}}}
\bigg(
\frac{1}{|X(q)|}
\sum_{\chi \in X(q)}
\psi_x(2\Re(\mathcal{G}_{\sigma, P_q(y)}({\bm{\chi}}_{P_q(y)})))
\\
&\hspace{1.5cm} -
\int_{T_{P_q(y)}}
\psi_x(2\Re(\mathcal{G}_{\sigma, P_q(y)}(\bm{t}_{P_q(y)}))) 
d^{\ast}\bm{t}_{P_q(y)}
\bigg)=0
\end{align*}
in $|x|\leq R$ for any $R>0$ and 
the above convergences are uniform. 
\end{lemma}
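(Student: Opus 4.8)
The plan is to expand $\psi_x(2\Re(\mathcal{G}_{\sigma,P_q(y)}(\bm{t}_{P_q(y)})))$ into an absolutely convergent Dirichlet-type series in the variables $t_p$, $p\in P_q(y)$, and then to compare the two averages term by term, using the orthogonality of Dirichlet characters on the arithmetic side against the Haar-measure orthogonality on the torus side. Concretely, since $g_{\sigma,p}(t_p)=-\log(1-t_p p^{-\sigma})=\sum_{k\geq 1} t_p^k p^{-k\sigma}/k$, one has $2\Re(\mathcal{G}_{\sigma,P_q(y)})=\sum_{p\in P_q(y)}\sum_{k\geq 1} (t_p^k+\overline{t_p}^{\,k})p^{-k\sigma}/k$, and exponentiating the $\psi_x$ gives a sum over tuples of nonnegative integer exponents; collecting terms, $\psi_x(2\Re(\mathcal{G}_{\sigma,P_q(y)}(\bm{t}_{P_q(y)})))=\sum_{m,n} c_{m,n}(x)\,\bm{t}_{P_q(y)}^{\,\bm{a}(m)}\overline{\bm{t}_{P_q(y)}}^{\,\bm{b}(n)}$ where $m,n$ run over positive integers supported on primes in $P_q(y)$, $\bm{a}(m),\bm{b}(n)$ are the exponent vectors, and $c_{m,n}(x)\ll (R/m^{\sigma})(R/n^{\sigma})$-type bounds hold (more precisely from the $L^\infty$ bound on $\widetilde{\mathcal{M}}_{\sigma,P_q(y)}$, or directly by the power-series estimate, one gets $\sum_{m,n}|c_{m,n}(x)|\ll 1$ uniformly for $|x|\leq R$, with the tail over $\max(m,n)>N$ bounded by something like $\exp(O(R\sum_{p>?}p^{-\sigma}))$ controlled terms).

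Next I would evaluate each side. On the torus side, $\int_{T_{P_q(y)}} \bm{t}^{\,\bm{a}(m)}\overline{\bm{t}}^{\,\bm{b}(n)}\,d^*\bm{t}$ equals $1$ if $\bm{a}(m)=\bm{b}(n)$, i.e.\ $m=n$, and $0$ otherwise, so the torus integral is $\sum_m c_{m,m}(x)$. On the arithmetic side, $\frac{1}{|X(q)|}\sum_{\chi\in X(q)}\chi(m)\overline{\chi(n)}$ equals (by orthogonality over primitive characters mod the prime $q$, so $|X(q)|=q-2$) essentially $1$ when $m\equiv n\pmod q$ and $\gcd(mn,q)=1$, with a lower-order correction of size $O(1/q)$; since $m,n\leq y^{\,O(\log y/\log 2)}$, for $\sigma>1$ fixed and $y$ fixed, or for $1/2<\sigma\leq 1$ and $y=\sqrt{\log\log q}$, every relevant $m,n$ is far smaller than $q$ once $q$ is large, hence $m\equiv n\pmod q$ forces $m=n$. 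Therefore the arithmetic average equals $\sum_m c_{m,m}(x) + (\text{error})$, and the difference of the two quantities in the lemma is exactly the error term coming from the $O(1/q)$ correction in character orthogonality, summed against $\sum_{m,n}|c_{m,n}(x)|$.

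The main obstacle — and the only place the two regimes for $y$ differ — is controlling this error uniformly in $|x|\leq R$ and showing it tends to $0$ as $q\to\infty$. The error is bounded by $\frac{1}{q}\sum_{m,n} |c_{m,n}(x)|$ where $m,n$ range over integers supported on $P_q(y)$; I must show this is $o(1)$. For $\sigma>1$ and $y$ fixed, $\sum_{m,n}|c_{m,n}(x)|$ is bounded by a constant depending only on $y,\sigma,R$ (the Euler product $\prod_{p\leq y}(\text{something})$ converges), so the error is $O_{y,\sigma,R}(1/q)\to 0$ trivially and uniformly. For $1/2<\sigma\leq 1$ with $y=\sqrt{\log\log q}$, the bound $\sum_{m,n}|c_{m,n}(x)|\leq \exp\!\big(O(R\sum_{p\leq y}p^{-\sigma})\big)$ and $\sum_{p\leq y}p^{-\sigma}\ll y^{1-\sigma}/\log y$ (or $\ll \log\log y$ when $\sigma=1$), so the sum is at most $\exp(O_R((\log\log q)^{(1-\sigma)/2}))$, which is $q^{o(1)}$; dividing by $q$ still gives $o(1)$, uniformly for $|x|\leq R$. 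I would also need to truncate the Dirichlet expansions at a parameter $N=N(q)$ (e.g.\ $N$ a small power of $q$) so that only finitely many $(m,n)$ with $m,n\leq N$ are handled by the orthogonality argument and the tail $\max(m,n)>N$ is bounded directly on both sides by a quantity tending to $0$; the choice of $y$ guarantees the truncated terms all satisfy $m,n<q$. Assembling these estimates yields the stated uniform convergence to $0$.
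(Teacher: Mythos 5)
Your proposal is correct and follows essentially the same route as the paper: expand $\psi_x(2\Re(\mathcal{G}_{\sigma,P_q(y)}))$ via the coefficients of $(1-t)^{-ix}$, truncate, use orthogonality of characters mod the prime $q$ (with the truncation level kept below $q$ so that $m\equiv n\pmod q$ forces $m=n$) against Haar orthogonality on the torus, and absorb the principal-character correction and the tails into an error that is $o(1)$ uniformly for $|x|\leq R$. The only cosmetic difference is that the paper truncates each local factor at a per-prime level $N_p$ chosen so that $N=\prod_{p\in P_q(y)}p^{N_p}<q$, whereas you truncate globally at a small power of $q$; both work.
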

%
%
%
%
\begin{remark}
We consider the values $\log L(\sigma, \chi)$ by $\mathcal{G}_{\sigma, P_q(y)}(\bm{\chi}_{P_q(y)})$. 
In the case $\sigma>1$, we know 
$\mathcal{G}_{\sigma, P_q(y)}(\bm{\chi}_{P_q(y)}) \to \log L(\sigma, \chi)$ as $y\to\infty$.
In the case $1\geq \sigma > 1/2$, we use the estimate \eqref{sigma<1} below.
This is the reason why $y$ depends on $q$ for $1\geq \sigma >1/2$ in Lemma~\ref{key_lemma_hosoi}. 
\end{remark}
\begin{proof}
%
%
For any real number $\varepsilon$ with $0<\varepsilon<1$, we put positive real numbers
\[
c(R)=\exp\bigg(\frac{R}{\sqrt{1.4}-1}\bigg)
\quad\text{and}\quad
c_1(\varepsilon, R)
=
\bigg(\frac{576 c(R)^8}{(1-\sqrt{1.4/2})^2 \varepsilon^2}\bigg)^2
\]
which are larger than $1$.
The large prime number $q_0$ is defined as follows:
\begin{itemize}
\item In the case $1<\sigma$, we choose a prime number $q_0$ satisfying 
\[
q_0> c_1(\varepsilon, R)^{y^2}
\quad\text{and}\quad
|X(q_0)|=q_0-2> \frac{6(c(R)^2)^{y^2}}{\varepsilon}.
\]
\item In the case $1/2<\sigma\leq 1$, 
since the functions 
\[
\begin{cases}
f_1(x)=&\log x -1 
\\
f_2(x)=&e^x-x^{\log c_1(\varepsilon, R)}
\\ 
f_3(x)=&e^x-\dfrac{6}{\varepsilon}x^{\log (c(R)^2)}-2
\end{cases}
\]
are monotonically increasing for sufficiently large $x$, 
then we can choose a sufficiently large prime number $q_0$ 
which satisfies 
$f_i(\log q_0)>0$ and $f_i(x)$ are monotonically increasing for $x\geq \log q_0$ $(i=1,2,3)$. 
Any prime number $q>q_0$ satisfies $\log\log q >1$,
\[ 
q>(\log q)^{\log c_1(\varepsilon, R)}=c_1(\varepsilon, R)^{y^2}
\]
and
\[
|X(q)|=q-2 >
\frac{6(\log q)^{\log (c(R)^2)}}{\varepsilon}
=
\frac{6(c(R)^2)^{y^2}}{\varepsilon},
\]
where $y^2=\log\log q$.
\end{itemize}
So we can say that any prime number $q>q_0$ satisfies 
\begin{equation}\label{q>q_0}
q >c_1(\varepsilon, R)^{y^2} 
\quad\text{and}\quad 
|X(q)| > \frac{6(c(R)^2)^{y^2}}{\varepsilon}
\end{equation}
in both cases.
For $q>q_0$, we will show 
\begin{align*}
&
\bigg|
\frac{1}{|X(q)|}
\sum_{\chi \in X(q)}
\psi_x(2\Re(\mathcal{G}_{\sigma, P_q(y)}({\bm{\chi}}_{P_q(y)})))
\\
&\hspace{1.5cm} -
\int_{T_{P_q(y)}}
\psi_x(2\Re(\mathcal{G}_{\sigma, P_q(y)}(\bm{t}_{P_q(y)}))) d^{\ast}\bm{t}_{P_q(y)}
\bigg|
<\varepsilon.
\end{align*}
%
%
\par
Firstly, we consider
\begin{align}\label{target0}
&
\psi_x\big(2\Re\big(\mathcal{G}_{\sigma, P_q(y)}(\bm{t}_{P_q(y)})\big)\big)
=
\psi_x\big(-2\sum_{p\in P_q(y)} \log|1-t_p p^{-\sigma}|\big)
\nonumber\\
=&
\psi_x\big(\sum_{p\in P_q(y)}\big(-\log (1-t_p p^{-\sigma})-\log (1-\overline{t_p} p^{-\sigma})\big)\big)
\nonumber\\
=&
\prod_{p\in P_q(y)}
\psi_x \big( g_{\sigma, p}(t_p)\big)
\psi_x \big( g_{\sigma, p}(t_p^{-1})\big).
\end{align}
We define the polynomials $H_r(x)$ of $x$ as
\[
H_r(x)=\sum_{k=1}^r \frac{1}{k!}\delta_k(r) x^k,\quad
\delta_k(r)=\sum_{\substack{r=r_1+\ldots+r_k\\ r_1, \ldots,  r_k\geq 1}} \frac{1}{r_1\cdots r_k}
\]
and $H_0(x)=1$.
These are the coefficients of
\[
\exp (-x\log (1-t))=(1-t)^{-x}=\sum_{r=0}^{\infty} H_r(x) t^r
\]
for $|t|<1$
(see (1.2.5) in Ihara and Matsumoto~\cite{ihara-matsumoto-moscow}).
Here we mention that $H_r(x)$ is written by $G_r^*(x)$ in Ihara and Matsumoto \cite{ihara-matsumoto-QJM}.
We know
\begin{equation}\label{target1}
\psi_x(g_{\sigma, p}(t_p))
=
\frac{1}{(1-t_p p^{-\sigma})^{ix}}
=
\sum_{r=0}^{\infty}
\frac{H_r(ix)}{p^{\sigma r}}
t_p^r.
\end{equation}
Since (65) and (77) in Ihara and Matsumoto~\cite{ihara-matsumoto-QJM}, we have
\[
|H_r(ix)| \leq H_r(|x|)
\leq \sum_{k=1}^r \frac{1}{k!}
\bigg(\begin{matrix}r-1\\k-1\end{matrix}\bigg) 
|x|^k
=
G_r(|x|)
\]
for $r\neq 0$, where $G_r(x)$ is defined by the $r$th coefficients of
\begin{equation}\label{defG_m}
\exp\bigg(\frac{xt}{1-t}\bigg)
=
1+\sum_{r=1}^{\infty} G_r(x)t^r
\end{equation}
for $|t|<1$ (see (1.2.4) in Ihara and Matsumoto~\cite{ihara-matsumoto-moscow}).
From \eqref{defG_m}, we have
\[
|H_r(x)| \leq G_r(|x|) < \exp\bigg(\frac{|x|t}{1-t}\bigg) t^{-r},
\]
for $0< t <1$.
Putting $t=1/\sqrt{1.4}$ yields 
\begin{equation}\label{G}
|H_r(ix)|<G_r(|x|)< \sqrt{1.4}^r\exp(|x|/(\sqrt{1.4}-1)).
\end{equation}
Let $\varepsilon'=\varepsilon/(3 (2c(R)^{2})^y)<1$,
we choose a integer $N_p>0$ for which
\begin{align}\label{N_p}
\sqrt{\frac{1.4}{p}}^{N_p+1}
\leq
\frac{(1-\sqrt{1.4/2})\varepsilon'}{4c(R)^2}
<
 \sqrt{\frac{1.4}{p}}^{N_p}
\end{align}
holds. 
We put
\[
\Psi_{\sigma, p}(t_p; N_p)
=
\sum_{r=0}^{N_p}\frac{H_r(ix)}{p^{\sigma r}} t_p^r
\]
which is a partial sum of $\psi_x(g_{\sigma, p}(t_p))$, 
and define
\[
\Psi_{\sigma, P_q(y)}(\bm{t}_{P_q(y)}, \bm{t}_{P_q(y)}^{-1}; \bm{N}_{P_q(y)})
=
\prod_{p\in P_q(y)}
\Psi_{\sigma, p}(t_p; N_p)
\Psi_{\sigma, p}(t_p^{-1}; N_p),
\]
where $\bm{N}_{P_q(y)}=(N_p)_{p\in P_q(y)}$.
Let
\[
h(t_p)
=
\psi_x(g_{\sigma, p}(t_p)) 
-
\Psi_{\sigma, p}(t_p; N_p).
\]
From \eqref{target1},  \eqref{G} and \eqref{N_p}, we have
\begin{align}\label{h}
|h(t_p)|
=&
\bigg|
\sum_{r=N_p+1}^{\infty} \frac{H_r(ix)}{p^{\sigma r}} t_p^r
\bigg|
\leq 
\sum_{r=N_p+1}^{\infty}\frac{G_r(R)}{p^{r/2}}
\nonumber\\
\leq& 
\exp\bigg(\frac{R}{\sqrt{1.4}-1}\bigg)
\sum_{r=N_p+1}^{\infty} \sqrt{\frac{1.4}{p}}^r
\nonumber\\
=&
c(R)
 \sqrt{\frac{1.4}{p}}^{N_p+1}
 \bigg(1-\sqrt{\frac{1.4}{p}}\bigg)^{-1}
\leq
\frac{\varepsilon'}{4c(R)}.
\end{align}
From \eqref{target1} and \eqref{defG_m}, we know
\begin{align}\label{cR}
|\psi_x(g_{\sigma, p}(t_p))|
\leq&
\sum_{r=0}^{\infty}
\frac{H_{r}(|x|)}{p^{\sigma r}}
\nonumber\\
\leq&
1+\sum_{r=1}^{\infty}
\frac{G_{r}(R)}{p^{r/2}}
=
\exp\bigg(\frac{R}{\sqrt{p}-1}\bigg)
<c(R)
\end{align}
and $|\Psi_{\sigma, p} (t_p; N_p)|<c(R)$.
Hence, for
\[
\mathcal{E}_p =
\psi_x(g_{\sigma, p}(t_p))\psi_x(g_{\sigma, p}(t_p^{-1}))
-
\Psi_{\sigma, p}(t_p; N_p)
\Psi_{\sigma, p}(t_p^{-1}; N_p),
\]
the estimates \eqref{h} and \eqref{cR} imply
\begin{align}\label{varepsilon'}
\mathcal{E}_p=
&
\Big(\Psi_{\sigma, p}(t_p; N_p)+h(t_p)\Big)
\Big(\Psi_{\sigma, p}(t_p^{-1}; N_p)+h(t_p^{-1})\Big)
\nonumber\\
& -
\Psi_{\sigma, p}(t_p; N_p)
\Psi_{\sigma, p}(t_p^{-1}; N_p)
\nonumber\\
=&
\Psi_{\sigma, p}(t_p; N_p)
h(t_p^{-1})
+
\Psi_{\sigma, p}(t_p^{-r}; N_p)
h(t_p)
+
h(t_p)h(t_p^{-1})
\nonumber\\
\leq &
\frac{\varepsilon'}{2} + \frac{{\varepsilon'}^2}{16c(R)^2}
<
\frac{\varepsilon'}{2} + \frac{\varepsilon'}{16}
<\varepsilon'.
\end{align}
From \eqref{target0}, \eqref{cR} and \eqref{varepsilon'}, we see
\begin{align}\label{apG}
&
\Big|
\psi_x(2\Re(\mathcal{G}_{\sigma, P_q(y)}(\bm{t}_{P_q(y)})))
-
\Psi_{\sigma, P_q(y)}(\bm{t}_{P_q(y)}, \bm{t}_{P_q(y)}^{-1}; \bm{N}_{P_q(y)})
\Big|
\nonumber\\
=&
\Big|
\prod_{p\in P_q(y)}
\psi_x(g_{\sigma, p}(t_p))
\psi_x(g_{\sigma, p}(t_p^{-1}))
-
\prod_{p\in P_q(y)}
\Psi_{\sigma, p}(t_p; N_p)
\Psi_{\sigma, p}(t_p^{-1}; N_p)
\Big|
\nonumber\\
=&
\Big|
\prod_{p\in P_q(y)}
\big(\Psi_{\sigma, p}(t_p; N_p)
\Psi_{\sigma, p}(t_p^{-1}; N_p)+\mathcal{E}_p\big)
\nonumber\\
& -
\prod_{p\in P_q(y)}
\Psi_{\sigma, p}(t_p; N_p)
\Psi_{\sigma, p}(t_p^{-1}; N_p)
\Big|
\nonumber\\
<&
(2^{|P_q(y)|}-1) \varepsilon'  c(R)^{2(|P_q(y)|-1)}
<
\varepsilon' (2c(R)^2)^y=\frac{\varepsilon}{3}.
\end{align}
Therefore we obtain
\begin{align}\label{key-estimate1}
&
\Big|
\frac{1}{|X(q)|}
\sum_{\chi\in X(q)}
\psi_x(2\Re(\mathcal{G}_{\sigma, P_q(y)}(\bm{\chi}_{P_q(y)})))
\nonumber
\\
&\hspace{2cm}-
\frac{1}{|X(q)|}
\sum_{\chi\in X(q)}
\Psi_{\sigma, P_q(y)}(\bm{\chi}_{P_q(y)}, \bm{\chi}_{P_q(y)}^{-1}; \bm{N}_{P_q(y)})
\Big|
<
\frac{\varepsilon}{3}.
\end{align}
%
%
\par
Secondly, 
we consider 
\[
\frac{1}{|X(q)|}
\sum_{\chi\in X(q)}
\Psi_{\sigma, P_q(y)}(\bm{\chi}_{P_q(y)}, \bm{\chi}_{P_q(y)}^{-1}; \bm{N}_{P_q(y)}).
\]
We see
\begin{align}\label{sum2'}
&
\Psi_{\sigma, P_q(y)}(\bm{\chi}_{P_q(y)}, \bm{\chi}_{P_q(y)}^{-1}; \bm{N}_{P_q(y)})
\nonumber\\
=&
\prod_{p\in P_q(y)}
\bigg(
\sum_{r=0}^{N_p}\frac{H_r(ix)}{p^{\sigma r}} \chi(p^r)
\bigg)
\bigg(
\sum_{\ell=0}^{N_p}\frac{H_{\ell}(ix)}{p^{\sigma \ell}} \overline{\chi}(p^{\ell})
\bigg)
\nonumber\\
=&
\prod_{p\in P_q(y)}
\bigg(
\sum_{r=0}^{N_p}
\frac{H_{r}^2(ix)}{p^{2\sigma r}}
+
\sum_{\ell=0}^{N_p-1}
\sum_{k=1}^{N_p-\ell}
\frac{H_{\ell}(ix)H_{\ell+k}(ix)}{p^{2\sigma \ell} p^{\sigma k}}
(\chi(p^{k})
+
\overline{\chi}(p^{k}))
\bigg)
\nonumber\\
=&
\prod_{p\in P_q(y)}
\bigg(
\sum_{r=0}^{N_p}
\frac{H_{r}^2(ix)}{p^{2\sigma r}}
\nonumber\\
&\hspace{1cm}+
\sum_{k=1}^{N_p}
\Big(
\sum_{\ell=0}^{N_p-k}
\frac{H_{\ell}(ix)H_{\ell+k}(ix)}{p^{2\sigma \ell}}
\Big)
\frac{(\chi(p^{k})
+
\overline{\chi}(p^{k}))}{p^{\sigma k}}
\bigg).
\end{align}
On the expansion of the product of $p\in P_q(y)$ in the right-hand side of \eqref{sum2'}, we denote $a(n)$ the coefficient of 
\[
\frac{1}{n^{\sigma}}
\prod_{p\mid n} 
(\chi(p^{n_p})+\overline{\chi}(p^{n_p})),
\]
where $n_p=\ord_p{n}$ which means $n$ has the prime factorisation $n=\prod_{p\mid n}p^{n_p}$. 
Then we can write
\begin{align}\label{exp}
&
\Psi_{\sigma, P_q(y)}(\bm{\chi}_{P_q(y)}, \bm{\chi}_{P_q(y)}^{-1}; \bm{N}_{P_q(y)})
\nonumber
\\
&\hspace{1.5cm}=
\prod_{p\in P_q(y)}\Big(\sum_{r=0}^{N_p} \frac{H_{r}^2(ix)}{p^{2\sigma r}}\Big)
+
\sum_{n=1}^{N}
\frac{a(n)}{n^{\sigma}}
\prod_{p\mid n} 
(\chi(p^{n_p})+\overline{\chi}(p^{n_p}))
\end{align}
and this yields
\begin{align}\label{sum2}
&
\frac{1}{|X(q)|}
\sum_{\chi\in X(q)}
\Psi_{\sigma, P_q(y)}(\bm{\chi}_{P_q(y)}, \bm{\chi}_{P_q(y)}^{-1}; \bm{N}_{P_q(y)})
-
\prod_{p\in P_q(y)}\Big(\sum_{r=0}^{N_p} \frac{H_{r}^2(ix)}{p^{2\sigma r}}\Big)
\nonumber\\
&=
\frac{1}{|X(q)|}
\sum_{n=1}^{N}
\sum_{\chi\;\mathrm{mod}\; q}
\frac{a(n)}{n^{\sigma}}\prod_{p\mid n} 
(\chi(p^{n_p})+\overline{\chi}(p^{n_p}))
\nonumber\\
&\hspace{2cm}-
\frac{1}{|X(q)|}\Psi_{\sigma, P_q(y)}(\bm{1}_{P_q(y)}, \bm{1}_{P_q(y)} ;\bm{N}_{P_q(y)})
\nonumber\\
&=
\frac{1}{|X(q)|}
\sum_{n=1}^{N}
\sum_{\chi\;\mathrm{mod}\; q}
\frac{a(n)}{n^{\sigma}}
\sum_{\substack{m_1m_2=n\\ \gcd(m_1, m_2)=1}}
\chi(m_1)\overline{\chi}(m_2) 
\nonumber\\
&\hspace{2cm}-
\frac{1}{|X(q)|}\Psi_{\sigma, P_q(y)}(\bm{1}_{P_q(y)}, \bm{1}_{P_q(y)} ;\bm{N}_{P_q(y)})
\nonumber\\
&=
\frac{q-1}{|X(q)|}
\sum_{n=1}^{N}
\frac{a(n)}{n^{\sigma}}
\sum_{\substack{m_1m_2=n\\ \gcd(m_1, m_2)=1\\m_1\equiv m_2\;\text{(mod\;$q$)}}}
1
-
\frac{1}{|X(q)|}\Psi_{\sigma, P_q(y)}(\bm{1}_{P_q(y)}, \bm{1}_{P_q(y)} ;\bm{N}_{P_q(y)}),
\end{align}
where $N=\prod_{p\in P_q(y)}p^{N_p}$ and $\bm{1}_{P_q(y)}=(\chi_0(p))_{P_q(y)}$ ($\chi_0$ is the principal character). 
If $n$ has a prime factor $p$ which satisfies $p> y$, $p=q$ or $n_p>N_p$,
then $a(n)=0$. In addition to $a(1)=0$.
Here we consider an upper bound of $N$.
Since we have
\[
\bigg(\frac{(1-\sqrt{1.4/2})\varepsilon'}{4c(R)^2}\bigg)^2
<\bigg(\frac{1.4}{p}\bigg)^{N_p}
<\bigg(\frac{\sqrt{p}}{p}\bigg)^{N_p}
=\bigg(\frac{1}{\sqrt{p}}\bigg)^{N_p}
\]
from the definition of $N_p$, 
then we see
\begin{align*}
p^{N_p/2}
&<
\bigg(\frac{4c(R)^2}{(1-\sqrt{1.4/2})\varepsilon'}\bigg)^2
=
\frac{144c(R)^4(4c(R)^4)^y}{(1-\sqrt{1.4/2})^2\varepsilon^2}
\\
&<
\bigg(\frac{576 c(R)^8}{(1-\sqrt{1.4/2})^2\varepsilon^2}\bigg)^y
=
c_1(\varepsilon, R)^{y/2},
\end{align*}
therefore we have $N< c_1(\varepsilon, R)^{y^2}$.
From \eqref{q>q_0}, we recall $N<c_1(\varepsilon, R)^{y^2}< q$,
then there is no pair of integers $(m_1, m_2)$ with $1\leq m_1m_2=n\leq N$, $m_1\equiv m_2\pmod{q}$ 
and $\gcd(m_1, m_2)=1$ except for the case $n=1$.
Since $a(1)=0$, from \eqref{q>q_0}, \eqref{cR} and \eqref{sum2}, we obtain
\begin{align}\label{key-estimate2}
&
\bigg|
\frac{1}{|X(q)|}
\sum_{\chi\in X(q)}
\Psi_{\sigma, P_q(y)}(\bm{\chi}_{P_q(y)}, \bm{\chi}_{P_q(y)}^{-1}; \bm{N}_{P_q(y)})
-
\prod_{p\in P_q(y)}\Big(\sum_{r=0}^{N_p} \frac{H_{r}^2(ix)}{p^{2\sigma r}}\Big)
\bigg|
\nonumber\\
&=
\frac{1}{|X(q)|}
\big|
\Psi_{\sigma, P_q(y)}(\bm{1}_{P_q(y)}, \bm{1}_{P_q(y)} ; \bm{N}_{P_q(y)})
\big|
<
\frac{(c(R)^2)^y}{|X(q)|}
<
\frac{\varepsilon}{6}.
\end{align}
%
%
\par
Thirdly, we consider
\[
\prod_{\substack{p \leq y \\p \neq q}}
\bigg(
\sum_{r=0}^{N_p}
\frac{H_{r}^2(ix)}{p^{2\sigma r}}
\bigg).
\]
From \eqref{apG} we have
\begin{align}\label{key-estimate3}
&
\left|
\prod_{p\in P_q(y)}
\bigg(\sum_{r=0}^{N_p}\frac{H_{r}^2(ix)}{p^{2\sigma r}}\bigg)
-
\int_{T_{P_q(y)}}
\!\!\!
\psi_x\Big(2\Re\big(\mathcal{G}_{\sigma, P_q(y)}(\bm{t}_{P_q(y)})\big)\Big)
d^{\ast}\bm{t}_{P_q(y)}
\right|
\nonumber\\
=&
\left|
\int_{T_{P_q(y)}}
\prod_{p\in P_q(y)}
\bigg(\sum_{r=0}^{N_p}\frac{H_{r}(ix)}{p^{\sigma r}} t_p^{r}\bigg)
\bigg(\sum_{\ell=0}^{N_p}\frac{H_{\ell}(ix)}{p^{\sigma \ell}} t_p^{-\ell}\bigg)
d^{\ast}\bm{t}_{P_q(y)}
\right.
\nonumber
\\
&\hspace{2cm}-
\left.
\int_{T_{P_q(y)}}
\psi_x\Big(2\Re\big(\mathcal{G}_{\sigma, P_q(y)}(\bm{t}_{P_q(y)})\big)\Big) 
d^{\ast}\bm{t}_{P_q(y)}
\right|
\nonumber
\\
= &
\left|
\int_{T_{P_q(y)}}
\Psi_{\sigma, P_q(y)}\big(\bm{t}_{P_q(y)}, \bm{t}_{P_q(y)}^{-1}; \bm{N}_{P_q(y)}\big)
d^{\ast}\bm{t}_{P_q(y)}
\right.
\nonumber\\
& \hspace{2cm}-
\left.
\int_{T_{P_q(y)}}
\psi_x\Big(2\Re\big(\mathcal{G}_{\sigma, P_q(y)}(\bm{t}_{P_q(y)})\big)\Big)
d^{\ast}\bm{t}_{P_q(y)}
\right|
\nonumber
\\
\leq &
\int_{T_{P_q(y)}}
\frac{\varepsilon}{3}
d^{\ast}\bm{t}_{P_q(y)}
=
\frac{\varepsilon}{3}.
\end{align}
%
%
\par
Finally, from \eqref{key-estimate1}, \eqref{key-estimate2} and \eqref{key-estimate3}, we obtain
\begin{align*}
&
\bigg|
\frac{1}{|X(q)|}
\sum_{\chi\in X(q)}
\psi_x\Big(2\Re\big(\mathcal{G}_{\sigma, P_q(y)}(\bm{\chi}_{P_q(y)})\big)\Big)
\\
&\hspace{1.5cm}-
\int_{T_{P_q(y)}}
\psi_x\Big(2\Re\big(\mathcal{G}_{\sigma, P_q(y)}(\bm{t}_{P_q(y)})\big)\Big)
d^{\ast}\bm{t}_{P_q(y)}
\bigg|
\\
\leq &
\bigg|
\frac{1}{|X(q)|}
\sum_{\chi\in X(q)}
\psi_x\Big(2\Re\big(\mathcal{G}_{\sigma, P_q(y)}(\bm{\chi}_{P_q(y)})\big)\Big)
\\
&\hspace{1.5cm}-
\frac{1}{|X(q)|}
\sum_{\chi\in X(q)}
\Psi_{\sigma, P_q(y)}\big(\bm{\chi}_{P_q(y)}, \bm{\chi}_{P_q(y)}^{-1}; \bm{N}_{P_q(y)}\big)
\bigg|
\\
&+
\bigg|
\frac{1}{|X(q)|}
\sum_{\chi\in X(q)}
\Psi_{\sigma, P_q(y)}\big(\bm{\chi}_{P_q(y)}, \bm{\chi}_{P_q(y)}^{-1}; \bm{N}_{P_q(y)}\big)
-
\prod_{p\in P_q(y)}\bigg(\sum_{r=0}^{N_p}\frac{H_{r}^2(ix)}{p^{2\sigma r}}\bigg)
\bigg|
\\
&+
\bigg|
\prod_{p\in P_q(y)}\bigg(\sum_{r=0}^{N_p}\frac{H_{r}^2(ix)}{p^{2\sigma r}}\bigg)
-
\int_{T_{P_q(y)}}
\psi_x\Big(2\Re\big(\mathcal{G}_{\sigma, P_q(y)}(\bm{t}_{P_q(y)})\big)\Big)
d^{\ast}\bm{t}_{P_q(y)}
\bigg|
< 
\varepsilon.
\end{align*}
\end{proof}
%
%
%
\begin{remark}
The two inequalities \eqref{N_p} mean an upper bound and a lower bound of $N_p$.
These inequalities are needed for the proof of this proposition in the case $\sigma>1/2$. If our target is just the case $\sigma>1$, we only require $N_p$ is an enough large number which satisfies \eqref{varepsilon'}. 
In the case $\sigma>1$, since $y$ and $N$ are not depend on $q$, we can see that \eqref{key-estimate2} holds for any sufficient large number $q$.
\end{remark}
For the proof of Theorem~\ref{hosoi1}, the following lemma is essential. This lemma is the case of $\Psi=\psi_x$ in Theorem~\ref{hosoi1} .
%
%
\begin{lemma}\label{main_lemma}
For $\sigma>1/2$, we have
\[
\lim_{\substack{q\to\infty\\ q \;\text{: prime}}}
\frac{1}{|X(q)|}
\sum_{\chi\in X'(q, \sigma)}
\psi_x\Big(2\Re\big(\log L(\sigma, \chi)\big)\Big)
=
\int_{\mathbb{R}}{\cal M}_{\sigma}(u)\psi_x (u) \frac{du}{\sqrt{2\pi}}
\]
in $|x|<R$ for any $R>0$.
\end{lemma}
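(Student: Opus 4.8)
\textbf{Overall strategy.} The plan is to interpolate between the finite-prime model $\mathcal{G}_{\sigma,P_q(y)}$ and the actual $L$-function. Lemma~\ref{key_lemma_hosoi} already tells us that the character average of $\psi_x(2\Re\mathcal{G}_{\sigma,P_q(y)}(\bm{\chi}_{P_q(y)}))$ converges, as $q\to\infty$, to $\int_{T_{P_q(y)}}\psi_x(2\Re\mathcal{G}_{\sigma,P_q(y)})\,d^{\ast}\bm{t}_{P_q(y)}$, uniformly in $|x|\le R$; and Proposition~\ref{matsumoto-umegaki-ijnt_prop1} together with Proposition~\ref{matsumoto-umegaki-ijnt_prop3} identifies that integral as $\widetilde{\mathcal{M}}_{\sigma,P_q(y)}(x)$, which tends to $\widetilde{\mathcal{M}}_{\sigma}(x)=\int_{\mathbb{R}}\mathcal{M}_{\sigma}(u)\psi_x(u)\,du/\sqrt{2\pi}$ uniformly in $u$ (hence in $x$ on compacta) as $y\to\infty$. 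So the only gap to fill is replacing $\mathcal{G}_{\sigma,P_q(y)}(\bm{\chi}_{P_q(y)})$ by $\log L(\sigma,\chi)$ inside the average, at the cost of a restriction to $\chi\in X'(q,\sigma)$ and an error that vanishes with $q$.

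\textbf{Key steps in order.} First, split the sum over $X'(q,\sigma)$ according to whether $\chi\in X_{\mathscr{D}}(q)$ (Assumption~\ref{ass} holds with a suitable $\delta$ and with $Y$ a function of $q$, matching the choice $y=\sqrt{\log\log q}$ of Lemma~\ref{key_lemma_hosoi}) or not. Second, bound the contribution of the bad set $\chi\notin X_{\mathscr{D}}(q)$: since $|\psi_x(\cdot)|=1$ on the real line only after we control the imaginary part, we instead use the zero-density estimate \eqref{montgomery} to show $\#\{\chi\in X(q):\chi\notin X_{\mathscr{D}}(q)\}=o(|X(q)|)$, and on this set bound $\psi_x$ crudely (the classical bound \eqref{montgomery} kills the count faster than any fixed power of $\log q$), so this contributes $o(1)$. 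Third, on the good set $X_{\mathscr{D}}(q)$, estimate the tail $\log L(\sigma,\chi)-\mathcal{G}_{\sigma,P_q(y)}(\bm{\chi}_{P_q(y)})=\sum_{p>y}-\log(1-\chi(p)p^{-\sigma})$: for $\sigma>1$ this is trivially $O(y^{1-\sigma}/(\sigma-1))\to 0$ with $y$; for $1/2<\sigma\le 1$ one needs the estimate \eqref{sigma<1} (referenced but not displayed in the excerpt) giving $\mathcal{G}_{\sigma,P_q(y)}(\bm{\chi}_{P_q(y)})=\log L(\sigma,\chi)+o(1)$ uniformly for $\chi\in X_{\mathscr{D}}(q)$ when $y=\sqrt{\log\log q}$. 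Then, since $\psi_x$ is uniformly continuous and $|x|\le R$, $|\psi_x(2\Re\log L(\sigma,\chi))-\psi_x(2\Re\mathcal{G}_{\sigma,P_q(y)}(\bm{\chi}_{P_q(y)}))|\le R\cdot o(1)$ on the good set. Fourth, assemble:
\begin{align*}
\frac{1}{|X(q)|}\sum_{\chi\in X'(q,\sigma)}\psi_x(2\Re\log L(\sigma,\chi))
&=\frac{1}{|X(q)|}\sum_{\chi\in X(q)}\psi_x(2\Re\mathcal{G}_{\sigma,P_q(y)}(\bm{\chi}_{P_q(y)}))+o(1)\\
&=\widetilde{\mathcal{M}}_{\sigma,P_q(y)}(x)+o(1)\xrightarrow[q\to\infty]{}\widetilde{\mathcal{M}}_{\sigma}(x),
\end{align*}
where the first $o(1)$ absorbs both the bad-character count and the tail approximation (with $y=y(q)\to\infty$), the second equality is Lemma~\ref{key_lemma_hosoi} plus Proposition~\ref{matsumoto-umegaki-ijnt_prop1}, and the final limit is Proposition~\ref{matsumoto-umegaki-ijnt_prop3}. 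Recognizing $\widetilde{\mathcal{M}}_{\sigma}(x)=\int_{\mathbb{R}}\mathcal{M}_{\sigma}(u)\psi_x(u)\,du/\sqrt{2\pi}$ (the Fourier-dual property in Proposition~\ref{matsumoto-umegaki-ijnt_prop3}) finishes the proof.

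\textbf{Main obstacle.} The delicate point is Step~three in the range $1/2<\sigma\le1$: controlling the tail $\sum_{p>y}\log(1-\chi(p)p^{-\sigma})^{-1}$ uniformly over the good characters when $y=\sqrt{\log\log q}$ grows so slowly. This is exactly where Assumption~\ref{ass} (zero-freeness of $L(s,\chi)$ in a box up to height $2\log Y$ and real part down to $1/2+\delta/16$) and the Borel--Carathéodory / Hadamard three-circle argument of Lemma~\ref{arg} must be invoked to show that the finite Euler product already approximates $\log L(\sigma,\chi)$ within $o(1)$; the bound \eqref{for_th1} guarantees the relevant quantities stay within the window where $\psi_x$ can be linearized. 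Coordinating the choice of $\delta$, $\varepsilon_0$, $Y$ and $y$ so that the good set has density $1-o(1)$ \emph{and} the tail is $o(1)$ simultaneously is the technical heart; everything else is bookkeeping built on the cited propositions.
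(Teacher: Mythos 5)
Your skeleton matches the paper's: split off the characters violating Assumption~\ref{ass} via the zero-density estimate \eqref{montgomery}, use the Lipschitz property of $\psi_x$ to swap $\log|L(\sigma,\chi)|$ for $\Re\,\mathcal{G}_{\sigma,P_q(y)}(\bm{\chi}_{P_q(y)})$ on the good set, and then invoke Lemma~\ref{key_lemma_hosoi} together with Propositions~\ref{matsumoto-umegaki-ijnt_prop1} and \ref{matsumoto-umegaki-ijnt_prop3}. But there is a genuine gap at exactly the point you flag as the ``main obstacle.'' For $1/2<\sigma\le 1$ with $y=\sqrt{\log\log q}$, it is \emph{not} true that $\mathcal{G}_{\sigma,P_q(y)}(\bm{\chi}_{P_q(y)})=\log L(\sigma,\chi)+o(1)$ uniformly over $\chi\in X_{\mathscr{D}}(q)$, and \eqref{sigma<1} does not say this. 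What Proposition~\ref{mu} actually gives is
\[
\Re\bigl(\log L(\sigma,\chi)-\mathcal{G}_{\sigma,P_q(y)}(\bm{\chi}_{P_q(y)})-S_y\bigr)\ll_{\varepsilon}(\log\log q)^{-\varepsilon},
\qquad
S_y=\sum_{\substack{p>y\\ p\neq q}}\frac{\chi(p)e^{-p/q}}{p^{\sigma}},
\]
i.e.\ the truncation error is only controlled \emph{after subtracting} the long smoothed prime sum $S_y$, which runs over primes up to about $q$ and is not $o(1)$ for an individual character, zero-free region or not (the Borel--Carath\'eodory/three-circle machinery of Lemma~\ref{arg} only yields bounds of size $(\log q)^{a(\delta)}$, nothing like $o(1)$).

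The missing idea is that $S_y$ must be killed \emph{on average over $\chi$}: the paper bounds
$\frac{1}{|X(q)|}\sum_{\chi\bmod q}|S_y|$ by Cauchy--Schwarz and opens the square, so that orthogonality forces the off-diagonal terms to satisfy $p_1\equiv p_2\pmod q$, which is impossible for distinct primes below $q$; this gives $\frac{1}{|X(q)|}\sum_{\chi}|S_y|\ll y^{-\varepsilon}=(\log\log q)^{-\varepsilon/2}$ (display \eqref{S_y}). Combined with the Lipschitz bound \eqref{psi}, this controls the \emph{averaged} quantity $\mathcal{X}'_q$, which is all the lemma needs --- but your write-up asserts a pointwise estimate that would fail, and without the mean-square/orthogonality step the argument does not close. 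Everything else in your proposal (the treatment of the bad set, the identification of the limit with $\widetilde{\mathcal{M}}_\sigma(x)$, the separate easy case $\sigma>1$) agrees with the paper.
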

\begin{proof}
Here we prove this lemma in the case $\sigma>1$. The proof of the case $1\geq \sigma > 1/2$ is in the next section.
Since $\sigma>1$, we know $X'(q, \sigma)=X(q)$.
For any $\varepsilon>0$,  let $y$ be a large number satisfying
\begin{equation}\label{estimate>1_1}
\Big|2\Re\big(\log L(\sigma, \chi)\big)
-
2\Re\big(\mathcal{G}_{\sigma, P_q(y)}(\bm{\chi}_{P_q(y)})\big)
\Big|
<
\frac{\varepsilon}{3R}
\end{equation}
and
\begin{equation}\label{estimate>1_2}
|\widetilde{\mathcal{M}}_{\sigma, P_q(y)}(x)
-\widetilde{\mathcal{M}}_{\sigma}(x)|
<
\frac{\varepsilon}{3},
\end{equation}
where $y$ does not depend on $q$.
From Proposition~\ref{matsumoto-umegaki-ijnt_prop1} and Lemma~\ref{key_lemma_hosoi} , 
we can find an integer $q_0$ such that any prime number $q$ with $q>q_0$ satisfies
\begin{equation}\label{estimate>1_3}
\Bigg|
\frac{1}{|X(q)|}
\sum_{\chi\in X(q)}
\psi_x\Big(2\Re\big(\mathcal{G}_{\sigma, P_q(y)}(\bm{\chi}_{P_q(y)})\big)\Big)
-\int_{\mathbb{R}}\mathcal{M}_{\sigma, P_q(y)}(u)\psi_x(u)\frac{du}{\sqrt{2\pi}}
\Bigg|
<
\frac{\varepsilon}{3}.
\end{equation}
Since
\begin{equation}\label{psi}
|\psi_x(u)-\psi_x(u')|
\leq |x|\cdot |u-u'|
\end{equation}
(see (6.5.19) in \cite{ihara} or (97) in\cite{ihara-matsumoto-QJM}), 
we have
\begin{align*}
&
\Big|
\frac{1}{|X(q)|}
\sum_{\chi\in X(q)}
\psi_x(2\log |L(\sigma, \chi)|)
-
\int_{\mathbb{R}}\mathcal{M}_{\sigma}(u)\psi_x (u) \frac{du}{\sqrt{2\pi}}\Big|
\\
\leq &
\Big|
\frac{1}{|X(q)|}
\sum_{\chi\in X(q)}
\psi_x(2\log |L(\sigma, \chi)|)
-
\frac{1}{|X(q)|}
\sum_{\chi\in X(q)}
\psi_x(2\Re(\mathcal{G}_{\sigma, P_q(y)}(\bm{\chi}_{P_q(y)})))
\Big|
\\
&+
\Big|
\frac{1}{|X(q)|}
\sum_{\chi\in X(q)}
\psi_x(2\Re(\mathcal{G}_{\sigma, P_q(y)}(\bm{\chi}_{P_q(y)})))
-
\int_{\mathbb{R}}\mathcal{M}_{\sigma, P_q(y)}(u)\psi_x (u) \frac{du}{\sqrt{2\pi}}\Big|
\\
&+
\Big|
\int_{\mathbb{R}}\mathcal{M}_{\sigma, P_q(y)}(u)\psi_x (u) \frac{du}{\sqrt{2\pi}}
-
\int_{\mathbb{R}}\mathcal{M}_{\sigma}(u)\psi_x (u) \frac{du}{\sqrt{2\pi}}\Big|
\\
\leq &
\frac{2|x|}{|X(q)|}
\sum_{\chi\in X(q)}
\big|\log |L(\sigma, \chi)|
-
\Re(\mathcal{G}_{\sigma, P_q(y)}(\bm{\chi}_{P_q(y)}))\big|
\\
&+
\Big|
\frac{1}{|X(q)|}
\sum_{\chi\in X(q)}
\psi_x(\Re(\mathcal{G}_{\sigma, P_q(y)}(\bm{\chi}_{P_q(y)})))
-
\int_{\mathbb{R}}\mathcal{M}_{\sigma, P_q(y)}(u)\psi_x (u) \frac{du}{\sqrt{2\pi}}\Big|
\\
&+
\Big|
\int_{\mathbb{R}}\mathcal{M}_{\sigma, P_q(y)}(u)\psi_x (u) \frac{du}{\sqrt{2\pi}}
-
\int_{\mathbb{R}}\mathcal{M}_{\sigma}(u)\psi_x (u) \frac{du}{\sqrt{2\pi}}\Big|
\\
=:&
\mathcal{X}_q +\mathcal{Y}_q +\mathcal{Z}_q,
\end{align*}
say.
From \eqref{estimate>1_1}, \eqref{estimate>1_2} and \eqref{estimate>1_3}, we see $\mathcal{X}_q+\mathcal{Y}_q +\mathcal{Z}_q<\varepsilon$.
\end{proof}
%
%
\section{The proof of Lemma~\ref{main_lemma} and Theorem~\ref{hosoi1}}
\par
In this section, 
we prove Lemma~\ref{main_lemma} 
for $1/2 < \sigma \leq 1$ and $|x|<R$. 
Let $\delta=16(\sigma-1/2)$ and $Y=q$ for Assumption~\ref{ass}.
By \eqref{montgomery}, we see
\[
\mathcal{V}_q:=
\frac{1}{|X(q)|}
\sum_{\chi\in X(q)\setminus X_{\mathscr{D}}(q)} 1
\ll
\frac{q^{A(\sigma)}(\log q)^{14}}{|X(q)|}.
\]
Since $|X(q)|=q-2$ and $A(\sigma)<1$, 
we see $\mathcal{V}_q\to 0$ as $q\to \infty$.
For the proof of Lemma~\ref{main_lemma}, we consider
\[
\Big|
\frac{1}{|X(q)|}
\sum_{\chi\in X'(q, \sigma)}
\psi_x(2\log |L(\sigma, \chi)|)
-
\int_{\mathbb{R}}\mathcal{M}_{\sigma}(u)\psi_x (u) \frac{du}{\sqrt{2\pi}}\Big|.
\]
Let $X'_1(q, \sigma)=X'(q, \sigma)\cap X_{\mathscr{D}}(q)$ and 
$X'_2(q, \sigma)=X'(q, \sigma)\setminus X'_1(q, \sigma)$.
We divide the sum over $X'(q, \sigma)$ into two summations.
One is the sum over $\chi\in X'_1(q, \sigma)$,  
and the other is the sum over $X'_2(q, \sigma)$.
We know
\begin{align*}
\mathcal{W}_q:=&
\frac{1}{|X(q)|}
\sum_{\chi\in X(q)\setminus X'_1(q, \sigma)} 1
\\
=&
\frac{|X(q)|-|X'(q, \sigma)|+|X'_2(q, \sigma)|}{|X(q)|}
\leq 
1-\frac{|X'(q, \sigma)|}{|X(q)|} + \mathcal{V}_q.
\end{align*}
Since Corollary~2.2 in \cite{ihara-matsumoto-QJM}, we see
$\mathcal{W}_q\to 0$ as $q\to\infty$. 
By \eqref{psi}, we see
\begin{align*}
&
\Big|
\frac{1}{|X(q)|}
\sum_{\chi\in X'(q, \sigma)}
\psi_x(2\log |L(\sigma, \chi)|)
-
\int_{\mathbb{R}}\mathcal{M}_{\sigma}(u)\psi_x (u) \frac{du}{\sqrt{2\pi}}\Big|
\\
\leq &
\Big|
\frac{1}{|X(q)|}
\sum_{\chi\in X'_2(q, \sigma)}
\psi_x(2\log |L(\sigma, \chi)|)
\Big|
\\
&+
\Big|
\frac{1}{|X(q)|}
\sum_{\chi\in X'_1(q, \sigma)}
\psi_x(2\log |L(\sigma, \chi)|)
-
\frac{1}{|X(q)|}
\sum_{\chi\in X'_1(q, \sigma)}
\psi_x(2\Re(\mathcal{G}_{\sigma, P_q(y)}(\bm{\chi}_{P_q(y)})))
\Big|
\\
&+
\Big|
\frac{1}{|X(q)|}
\sum_{\chi\in X'_1(q, \sigma)}
\psi_x(2\Re(\mathcal{G}_{\sigma, P_q(y)}(\bm{\chi}_{P_q(y)})))
-
\int_{\mathbb{R}}\mathcal{M}_{\sigma, P_q(y)}(u)\psi_x (u) \frac{du}{\sqrt{2\pi}}
\Big|
\\
&+
\Big|
\int_{\mathbb{R}}\mathcal{M}_{\sigma, P_q(y)}(u)\psi_x (u) \frac{du}{\sqrt{2\pi}}
-
\int_{\mathbb{R}}\mathcal{M}_{\sigma}(u)\psi_x (u) \frac{du}{\sqrt{2\pi}}\Big|
\\
\leq&
\mathcal{V}_q
+
\frac{2|x|}{|X(q)|}
\sum_{\chi\in X'_1(q, \sigma)}
\Big|
\log |L(\sigma, \chi)|
-
\Re(\mathcal{G}_{\sigma, P_q(y)}(\bm{\chi}_{P_q(y)}))
\Big|
\\
&+
\Big|
\frac{1}{|X(q)|}
\sum_{\chi\in X(q)}
\psi_x(2\Re(\mathcal{G}_{\sigma, P_q(y)}(\bm{\chi}_{P_q(y)})))
-
\int_{\mathbb{R}}\mathcal{M}_{\sigma, P_q(y)}(u)\psi_x (u) \frac{du}{\sqrt{2\pi}}\Big|
\\
&+
\Big|
\frac{-1}{|X(q)|}
\sum_{\chi\in X(q)\setminus X'_1(q, \sigma)}
\psi_x(2\Re(\mathcal{G}_{\sigma, P_q(y)}(\bm{\chi}_{P_q(y)})))
\Big|
\\
&+
\Big|
\int_{\mathbb{R}}\mathcal{M}_{\sigma, P_q(y)}(u)\psi_x (u) \frac{du}{\sqrt{2\pi}}
-
\int_{\mathbb{R}}\mathcal{M}_{\sigma}(u)\psi_x (u) \frac{du}{\sqrt{2\pi}}\Big|
\\
\leq&
\mathcal{V}_q
+
\frac{2|x|}{|X(q)|}
\sum_{\chi\in X'_1(q, \sigma)}
\Big|
\log |L(\sigma, \chi)|
-
\Re(\mathcal{G}_{\sigma, P_q(y)}(\bm{\chi}_{P_q(y)}))
\Big|
+\mathcal{Y}_q +\mathcal{W}_q+\mathcal{Z}_q,
\end{align*}
say.
Let $y=\sqrt{\log\log q}$ for large $q$. 
We see $\mathcal{Y}_q \to 0$ as $q \to \infty$ by Lemma~\ref{key_lemma_hosoi} and Proposition~\ref{matsumoto-umegaki-ijnt_prop1}, 
and we see $\mathcal{Z}_q \to 0$ as $q \to \infty$ by Proposition~\ref{matsumoto-umegaki-ijnt_prop3}. 
In above, we mentioned $\mathcal{V}_q \to 0$ and $\mathcal{W}_q \to 0$ as $q\to\infty$.
Therefore, in this section, we will show
\[
\mathcal{X}'_q 
:=
\frac{2|x|}{|X(q)|}
\sum_{\chi\in X'_1(q, \sigma)}
\Big|
\log |L(\sigma, \chi)|
-
\Re(\mathcal{G}_{\sigma, P_q(y)}(\bm{\chi}_{P_q(y)}))
\Big|
\to 0
\]
as $q \to \infty$ 
by the method in \cite{matsumoto-umegaki-ijnt}.
\begin{proposition}\label{mu}
Let $Y=q$ be a large number (here, $q$ does not need to be a prime number) and we assume that $L(s, \chi)$ satisfies Assumption~\ref{ass}, where $\chi$ is a primitive character of conductor $q$.
For any $0< \varepsilon <1/2$, we put $1/2+2\varepsilon \leq u_0<3/2$. Then we have
\begin{equation}\label{F}
\Re(\log L(u_0, \chi) - \mathcal{G}_{u_0, P_q(y)}(\bm{\chi}_{P_q(y)}) 
-
S_y)
\ll_{\varepsilon} (\log\log q)^{-\varepsilon},
\end{equation}
where
\[
S_y
=
\sum_{\substack{p>y\\ p\neq q}} 
\frac{\chi(p) e^{-p/q}}{p^{u_0}}
\]
and $y=\sqrt{\log\log q}$.
\end{proposition}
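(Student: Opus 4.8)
The plan is to prove \eqref{F} by the Mellin/contour method underlying Lemma~3.6 of Akbary and Hamieh \cite{akbary-hamieh}. Write
\[
F(w)=\log L(w,\chi)-\mathcal{G}_{w,P_q(y)}(\bm{\chi}_{P_q(y)}),
\]
which, under Assumption~\ref{ass}, is holomorphic on the rectangle $1/2+\delta/16\le\Re(w)\le 1$, $|\Im(w)|\le 2\log Y$ (with the branch of $\log L$ fixed by horizontal continuation), since $L(\cdot,\chi)$ has no zeros there. Expanding the Euler products for $\Re(w)>1$ gives $F(w)=\sum_{p>y,\,p\ne q}\sum_{k\ge 1}\chi(p)^k/(k\,p^{kw})$; isolating $k=1$ yields, still for $\Re(w)>1$,
\[
F(w)=P_{\chi,y}(w)+R_{\chi,y}(w),\qquad P_{\chi,y}(w)=\sum_{p>y,\,p\ne q}\frac{\chi(p)}{p^{w}},\qquad R_{\chi,y}(w)=\sum_{p>y,\,p\ne q}\sum_{k\ge 2}\frac{\chi(p)^k}{k\,p^{kw}}.
\]
Here $R_{\chi,y}$ is holomorphic for $\Re(w)>1/2$ and $|R_{\chi,y}(w)|\ll_\varepsilon y^{1-2\Re(w)}$ for $\Re(w)\ge 1/2+\varepsilon$; since $u_0\ge 1/2+2\varepsilon$ and $y=\sqrt{\log\log q}$, this gives $R_{\chi,y}(u_0)\ll_\varepsilon y^{-4\varepsilon}=(\log\log q)^{-2\varepsilon}$. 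Because $R_{\chi,y}$ continues holomorphically, $P_{\chi,y}=F-R_{\chi,y}$ also extends to the rectangle of Assumption~\ref{ass}, and it therefore suffices to compare $\Re\,P_{\chi,y}(u_0)$ with $\Re\,S_y$.

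Using $e^{-x}=\frac{1}{2\pi i}\int_{(c)}\Gamma(s)x^{-s}\,ds$ ($c>0$, $x>0$) with $x=p/q$ and interchanging sum and integral on the line $\Re(s)=2$ (where $\sum_p|\chi(p)|p^{-u_0-2}<\infty$), one gets
\[
S_y=\frac{1}{2\pi i}\int_{(2)}\Gamma(s)\,q^{s}\,P_{\chi,y}(u_0+s)\,ds .
\]
I would then move the line of integration to $\Re(s)=c_1$ for a fixed $c_1<0$ with $1/2+\delta/16<u_0+c_1<u_0$ (this is where one uses that $u_0$ is bounded away from $1/2+\delta/16$), keeping the contour on $\Re(s)=2$ for $|\Im(s)|>2\log Y$ so that it stays inside the holomorphy rectangle and inside the region of exponential decay of $\Gamma$. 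The only pole crossed is the simple pole of $\Gamma$ at $s=0$, with residue $q^{0}P_{\chi,y}(u_0)=P_{\chi,y}(u_0)$, so
\[
S_y=P_{\chi,y}(u_0)+\frac{1}{2\pi i}\int_{\substack{\Re(s)=c_1\\ |\Im(s)|\le 2\log Y}}\Gamma(s)\,q^{s}\,P_{\chi,y}(u_0+s)\,ds+E,
\]
where $E$ collects the two horizontal connectors at $\Im(s)=\pm 2\log Y$ and the tails $|\Im(s)|>2\log Y$ on $\Re(s)=2$.

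On the shifted segment one has $|q^{s}|=q^{c_1}=q^{-|c_1|}$, while $|P_{\chi,y}(u_0+c_1+it)|\le|F(u_0+c_1+it)|+|R_{\chi,y}(u_0+c_1+it)|\ll_\delta(\log Y)^{a(\delta)}$ by \eqref{for_th1} together with the trivial bound $|\mathcal{G}_{w,P_q(y)}(\bm{\chi}_{P_q(y)})|\ll\sum_{p\le y}p^{-\Re(w)}\ll y$ valid for $\Re(w)>1/2$; integrating against $\int_{\mathbb{R}}|\Gamma(c_1+it)|\,dt<\infty$ bounds this term by $\ll_\delta(\log Y)^{a(\delta)}q^{-|c_1|}$. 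Using $|\Gamma(\sigma+it)|\asymp|t|^{\sigma-1/2}e^{-\pi|t|/2}$ and the same polynomial-in-$\log Y$ bound for $|P_{\chi,y}|$, one gets $E\ll q^{2-\pi+o(1)}$. Since $Y=q$, both quantities are $o((\log\log q)^{-\varepsilon})$, hence $\Re\,P_{\chi,y}(u_0)=\Re\,S_y+O_\varepsilon((\log\log q)^{-\varepsilon})$; adding $\Re\,R_{\chi,y}(u_0)=O_\varepsilon((\log\log q)^{-2\varepsilon})$ gives $\Re\,F(u_0)=\Re\,S_y+O_\varepsilon((\log\log q)^{-\varepsilon})$, which is precisely \eqref{F}.

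The delicate point is the contour shift rather than any of the estimates: I must verify that the branch of $\log L$ entering $F$ is the single-valued holomorphic branch available on the (simply connected) rectangle of Assumption~\ref{ass}, that a negative abscissa $c_1$ can actually be chosen, i.e.\ that $u_0-(1/2+\delta/16)$ is positive and depends only on $\varepsilon$ and $\delta$ (so that the relevant application with $u_0\ge 1/2+2\varepsilon$ is admissible), and that the polynomial-in-$\log Y$ bound of Lemma~\ref{arg}/\eqref{for_th1} remains valid along the whole shifted part of the contour up to height $2\log Y$ (this is the reason Assumption~\ref{ass} is imposed up to $2\log Y$ and not merely $\log Y$; in the thin strip $1/2+\delta/16<\Re(w)<(1+\delta)/2$ not literally covered by Lemma~\ref{arg} one still has $|\log L(w,\chi)|\ll_\delta(\log Y)^{1+o(1)}$ from the zero-free region, which is enough). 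Everything else—the prime-square tail $R_{\chi,y}$ and the short sum defining $\mathcal{G}_{w,P_q(y)}$—is elementary once $u_0-1/2\ge 2\varepsilon$ and $y=\sqrt{\log\log q}$.
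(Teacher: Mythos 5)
Your argument is correct in substance but organized differently from the paper's. The paper never manipulates the bare prime sum $\sum_{p>y}\chi(p)p^{-w}$ directly: it applies Mellin's formula to the logarithmic derivative $F'/F(u+s,\chi)$ of $F=L/L_{P_q(y)}$, shifts the contour inside the zero-free rectangle to pick up $F'/F(u,\chi)$ as the residue at $s=0$, and then integrates the resulting identity in $u$ from $u_0$ to $3/2$; the $k=1$ terms of the weighted sum $\sum_{p>y}\sum_{k}\chi(p^k)(\log p)e^{-p^k/X}p^{-ku}$ are isolated only after this integration, and the $k\geq 2$ terms, the terms at $u=3/2$, and $\log F(3/2,\chi)$ are then estimated trivially. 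You instead split $\log F=P_{\chi,y}+R_{\chi,y}$ at the level of Dirichlet series, note that $P_{\chi,y}=F-R_{\chi,y}$ continues holomorphically to the rectangle of Assumption~\ref{ass}, and run the Mellin/contour-shift argument directly on $P_{\chi,y}$. This buys a cleaner identification of $S_y$ (no $u$-integration, no bookkeeping of the endpoint $3/2$) at the price of having to justify the continuation and growth of $P_{\chi,y}$ off the half-plane of absolute convergence, which you do correctly via $|P_{\chi,y}|\leq |F|+|R_{\chi,y}|$. The inputs (Mellin's formula, the residue theorem inside the Assumption~\ref{ass} rectangle, the bound \eqref{for_th1}, Stirling) are the same in both versions.

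Two points you flag deserve the emphasis you give them, and neither is worse than what the paper itself does. First, your shift needs a fixed $c_1<0$ with $u_0+c_1$ still inside the zero-free rectangle, i.e.\ $u_0>1/2+\delta/16$ with definite room; the paper's shift to $\Re(u+s)=1/2+\varepsilon$ needs $\varepsilon\geq\delta/16$ for exactly the same reason, and with the choice $\delta=16(\sigma-1/2)$, $u_0=\sigma$ made in Section~4 both versions require $\delta$ to be shrunk slightly, so this is a constant-chasing issue shared with the source rather than a gap of yours. Second, starting your integral on $\Re(s)=2$ forces the horizontal connectors up to height about $2\log Y$, where Lemma~\ref{arg} is not stated (it covers $|t|\leq\log Y$ only); the cheap fix, which is what the paper does, is to start on $\Re(s)=1/2$, after which connectors at height $\log Y$ already give tails $\ll X^{1/2}(\log q)\,q^{-\pi/2}$ and no bound beyond height $\log Y$ is needed.
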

\begin{proof} 
We consider
\[
L_{P_q(y)}(s,\chi)
=
\prod_{p\in P_q(y)} (1-\chi(p)p^{-s})^{-1}
\]
and put
\[
F(s, \chi)
:=\frac{L(s,\chi)}{L_{P_q(y)}(s,\chi)}.
\]
For $\sigma>1$, we see 
\[
\log F(s, \chi)
=\log L(s, \chi) - \log L_{P_q(y)}(s,\chi) 
= 
\sum_{p>y}\sum_{k=1}^{\infty}\frac{\chi(p^k)}{kp^{ks}}
\]
and
\[
\frac{F'}{F}(s, \chi)
=
-\sum_{p>y}\sum_{k=1}^{\infty}\frac{\chi(p^k)\log p}{p^{ks}}.
\]
From \eqref{for_th1}, we have
\[
\frac{L'(s, \chi)}{L(s, \chi)} \ll_{\varepsilon} \log q,
\]
for $1/2+\varepsilon \leq \sigma \leq 1$ and $|t| \leq \log q$.
And we know
\begin{align*}
\bigg|
\frac{L'_{P_q(y)}(s, \chi)}{L_{P_q(y)}(s, \chi)}
\bigg|
\leq&
\sum_{p\leq y}\sum_{k=1}^{\infty}
\frac{\log p}{p^{k\sigma}}
\leq
\log y 
\sum_{p\leq y}\sum_{k=1}^{\infty}
\frac{1}{p^{k\sigma}}
\\
=&
\log y
\sum_{p\leq y}
\frac{1}{p^{\sigma}-1}
\ll
y.
\end{align*}
Hence
\[
\frac{F'(s, \chi)}{F(s, \chi)} 
\ll_{\varepsilon} 
\log q +y
\ll
\log q
\]
for $1/2+\varepsilon \leq \sigma \leq 1$ and $|t|\leq \log q$.
Let $1/2+2\varepsilon \leq u \leq 3/2$ and $X>1$,  Mellin's formula 
\begin{equation}\label{mellin_c}
e^{-w}=\frac{1}{2\pi i}\int_{(c)} w^{-s}\Gamma(s) ds
\end{equation}
yields
\[
-\sum_{p>y}
\sum_{k=1}^{\infty}\frac{\chi(p^k)\log p}{p^{k u}} e^{-p^k/X}
=
\frac{1}{2\pi i}\int_{(1/2)}
\frac{F'}{F}(u+s, \chi) X^s \Gamma (s) ds,
\]
where the path of integration $(c)$ means the vertical line $\Re(z)=c>0$.
By the residue theorem, we see 
\begin{align*}
&
-\sum_{p>y} 
\sum_{k=1}^{\infty}\frac{\chi(p^k)\log p}{p^{k u}} e^{-p^k/X}
-
\frac{F'}{F}(u, \chi) 
\\
= &
\frac{1}{2\pi i}\int_{\substack{1/2+\varepsilon-u\leq \Re(s) \leq 1/2 \\ |\Im(s)|= \log q}}
+\frac{1}{2\pi i} \int_{\substack{\Re(s)=1/2+\varepsilon-u\\ |\Im(s)|\leq \log q}} 
+\frac{1}{2\pi i} \int_{\substack{\Re(s)=1/2 \\ |\Im(s)|\geq \log q}},
\end{align*}
where the integrands are $F'/F(u+s, \chi)X^s\Gamma(s)$.
By the estimate
\begin{equation}\label{stirling}
\Gamma(s)\ll |t|^{\sigma -1/2}e^{-\pi|t|/2}
\quad
(|\sigma|\leq 1,\; |t|>1),
\end{equation}
which is shown by Stirling's formula, 
we obtain
\[
-\sum_{p\neq q}
\sum_{k=1}^{\infty}\frac{\chi(p^k)\log p}{p^{k u}} e^{-p^k/X}
-
\frac{F'}{F}(u, \chi) 
\ll
\frac{\log q}{X^{\varepsilon}}
+\frac{X^{1/2} \log q}{q^{\pi/2}}.
\]
Let $u_0$ be a real number with $1/2+2\varepsilon\leq u_0<3/2$.
Considering the integration of $u$ in the above formula from $u_0$ to $3/2$ yields
\begin{align*}
&
- \log F(3/2, \chi) + \log F(u_0, \chi)  
\\
=&
\int_{u_0}^{3/2}
\sum_{p>y} 
\sum_{k=1}^{\infty}\frac{\chi(p^k)\log p}{p^{k u}} e^{-p^k/X}
du
+
O\bigg(
\frac{\log q}{X^{\varepsilon}}
+\frac{X^{1/2} \log q}{q^{\pi/2}}
\bigg).
\end{align*}
The integral in this formula is
\begin{align*}
&
\sum_{\substack{p>y\\ p\neq q}} 
\log p 
\sum_{k=1}^{\infty} \chi(p^k) e^{-p^k/X}
\int_{u_0}^{3/2}
p^{-k u} du
\\
=&
\sum_{\substack{p>y\\ p\neq q}} 
\log p 
\sum_{k=1}^{\infty} \chi(p^k) e^{-p^k/X}
\bigg[\frac{-p^{-ku}}{k\log p}\bigg]_{u_0}^{3/2}
\\
=&
\sum_{\substack{p>y\\ p\neq q}} 
\frac{\chi(p) e^{-p/X}}{p^{u_0}}
+
\sum_{\substack{p>y\\ p\neq q}} 
\sum_{k=2}^{\infty}  
\frac{\chi(p^k)e^{-p^k/X}}{kp^{ku_0}}
-
\sum_{p>y}\sum_{k=1}^{\infty}
\frac{\chi(p^k)e^{-p^k/X}}{kp^{3k/2}}.
\end{align*}
In the right-hand side of the above equation,
the last two sums are each estimated by
\[
\sum_{p>y} 
\sum_{k=2}^{\infty} \frac{\chi(p^k)e^{-p^k/X}}{k p^{k u_0}} 
<
\sum_{k=2}^{\infty}
\sum_{p>y} 
\frac{1}{k p^{k (1/2+2\varepsilon)}} 
\ll
\sum_{k=2}^{\infty} \frac{1}{k^2}
\sum_{p>y} 
\frac{1}{p^{k/3+2k\varepsilon}} 
\ll_{\varepsilon} 
 \frac{1}{y^{2\varepsilon}}
\]
and
\[
\sum_{p>y}\sum_{k=1}^{\infty}
\frac{\chi(p^k)e^{-p^k/X}}{kp^{3k/2}}
\ll 
\sum_{p>y}\sum_{k=1}^{\infty}
\frac{1}{k p^{3k/2}}
<
\sum_{p>y}\sum_{k=1}^{\infty}
\frac{1}{k^2 p^{4k/3} }
\ll
\sum_{p>y} \frac{1}{p^{4/3}}
\ll
\frac{1}{y^{1/6}}.
\]
We see also $\log F(3/2, \chi) \ll y^{-1/6}$, 
then we have
\begin{equation}\label{Fu_0}
\log F(u_0, \chi)
=
\sum_{\substack{p>y\\ p\neq q}} 
\frac{\chi(p) e^{-p/X}}{p^{u_0}}
+
O_{\varepsilon}\bigg(
\frac{1}{y^{2\varepsilon}}
+
\frac{\log q}{X^{\varepsilon}}
+\frac{X^{1/2} \log q}{q^{\pi/2}}
\bigg).
\end{equation}
Since
\[
\Re(\log {L}_{P_q(y)}(u_0, \chi))
=
\Re(\mathcal{G}_{u_0, P_q(y)}(\bm{\chi}_{P_q(y)})),
\]
putting $X=q$ and \eqref{Fu_0} yield
\begin{equation}\label{sigma<1}
\Re(\log L(u_0, \chi) - \mathcal{G}_{u_0, P_q(y)}(\bm{\chi}_{P_q(y)}) 
-
S_y)
\ll_{\varepsilon}
\frac{1}{y^{2\varepsilon}}+\frac{\log q}{q^{\varepsilon}}.
\end{equation}
By $y=\sqrt{\log\log q}$, we have
\[
\Re(\log L(u_0, \chi) - \mathcal{G}_{u_0, P_q(y)}(\bm{\chi}_{P_q(y)}) 
-
S_y)
\ll_{\varepsilon} (\log\log q)^{-\varepsilon}.
\]
\end{proof}
Since we see
\begin{align}\label{S_y}
&
\frac{1}{|X(q)|}
\sum_{\chi\in X'(q, \sigma)}
|\Re(S_y)|
\leq
\frac{1}{|X(q)|}
\sum_{\chi\;\text{mod}\; q}
|S_y|
\nonumber
\\
\leq&
\frac{1}{|X(q)|}
\bigg(\sum_{\chi\; \text{mod}\; q} 1\bigg)^{1/2}
\bigg(\sum_{\chi\; \text{mod}\; q} |S_y|^2\bigg)^{1/2}
\nonumber
\\
\ll&
\frac{1}{\sqrt{q}}
\bigg(
\sum_{\chi\; \text{mod}\; q} 
\bigg(
\sum_{\substack{p>y\\ p\neq q}} 
\frac{e^{-2p/q}}{p^{1+4\varepsilon}}
+
\sum_{\substack{p_i>y\\ p_i\neq q\\ p_1\neq p_2}} 
\frac{\chi(p_1p_2^{-1}) e^{-(p_1+p_2)/q}}{p_1^{u_0}p_2^{u_0}}
\bigg)
\bigg)^{1/2}
\nonumber
\\
\ll_{\varepsilon}&
\bigg(
\frac{1}{y^{2\varepsilon}}
+
\sum_{\substack{p_i>y\\ p_i\neq q\\ p_1\equiv p_2 \;\text{mod}\; q}} 
\frac{e^{-(p_1+p_2)/q}}{p_1^{u_0}p_2^{u_0}}
\bigg)^{1/2}
\nonumber
\\
\ll_{\varepsilon} &
\bigg(
\frac{1}{y^{2\varepsilon}}
+
\sum_{p_1>y}
\frac{e^{-p_1/q}}{p_1^{u_0}}
\sum_{\substack{p_2>p_1\\ p_1\equiv p_2 \;\text{mod}\; q}} 
\frac{e^{-p_2/q}}{p_2^{u_0}}
\bigg)^{1/2}
\nonumber
\\
\leq&
\bigg(
\frac{1}{y^{2\varepsilon}}
+
\sum_{p_1>y}
\frac{e^{-p_1/q}}{p_1^{2u_0}}
\sum_{\substack{p_2>p_1\\ p_1\equiv p_2 \;\text{mod}\; q}} 
e^{-p_2/q}
\bigg)^{1/2}
\ll
\frac{1}{y^{\varepsilon}}=\frac{1}{(\log\log q)^{\varepsilon/2}},
\end{align}
from Proposition~\ref{mu} and \eqref{S_y}, we have
\[
\mathcal{X}'_q
\ll
\frac{1}{(\log\log q)^{\varepsilon/2}} \to 0
\]
as $q\to\infty$.
The proof of Lemma~\ref{main_lemma} is thus complete. 
We can obtain Theorem~\ref{hosoi1} immediately by the same argument in \cite{ihara-matsumoto-QJM}, \cite{ihara-matsumoto-moscow}, \cite{matsumoto} and \cite{matsumoto-umegaki-ijnt}.
%
%
\section{The preparation of the proof of Theorem~\ref{main1}}
\par
We recall $\mathcal{L}(s, \chi)$ is either $\log L(s, \chi)$ or $L'(s, \chi)/L(s, \chi)$. 
By Ihara and Matsumoto's work \cite{ihara-matsumoto-moscow}, we can write
\[
\psi_x(\mathcal{L}(s, \chi))
=\sum_{n=1}^{\infty}\frac{\lambda_{2x}(n)\chi(n)}{n^s},
\]
where
\[
\lambda_{2x}(n)=\prod_{p\mid n} \lambda_{2x}(p^{n_p})
\]
for $n=\prod_p p^{n_p}$ and
\[
\lambda_{2x}(p^{n_p})=
\begin{cases}
G_{n_p}(-ix\log p) & \;\text{if}\;\mathcal{L}(s, \chi)=L'(s, \chi)/L(s, \chi),
\\
H_{n_p}(-ix) & \;\text{if}\;\mathcal{L}(s, \chi)=\log L(s, \chi).
\end{cases}
\]
In the part of Theorem~2 in \cite{ihara-matsumoto-moscow}, we see $|\lambda_{2x}(n)|\ll_{\varepsilon} n^{\varepsilon}$ for any $\varepsilon>0$.
In this section, we will prove the following proposition.
\begin{proposition}\label{key_prop}
For $\sigma>1/2$, we have
\[
\lim_{Y\to\infty}\frac{1}{N (Y)}
\underset{|D|\leq Y}{{\sum}^{\dag}}
\psi_x(\mathcal{L}(\sigma, \chi_D))
=
\sum_{n=1}^{\infty}
\frac{\lambda_{2x}(n^2)\prod_{p\mid n}p(p+1)^{-1}}{n^{2\sigma}}.
\]
\end{proposition}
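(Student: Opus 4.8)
The plan is to write the average as $\sum_{n}\lambda_{2x}(n)n^{-\sigma}$ times the average of $\chi_D(n)$ over fundamental discriminants, and to use that this $D$-average is negligible unless $n$ is a perfect square. Two arithmetic facts underlie everything. First, for $n$ \emph{not} a perfect square, $N(Y)^{-1}{\sum}^{\ast}_{|D|\leq Y}\chi_D(n)\ll_{\varepsilon}Y^{-1/2}n^{\varepsilon}$; this is the classical P\'olya--Vinogradov bound for the Kronecker symbol, after sieving out the non-squarefree $D$ by M\"obius inversion. Second, for $n=m^{2}$ one has $\chi_D(m^{2})=\mathbf{1}_{\gcd(D,m)=1}$, and the proportion of fundamental discriminants $|D|\leq Y$ coprime to $m$ equals $\prod_{p\mid m}p(p+1)^{-1}+O_{\varepsilon}(m^{\varepsilon}Y^{-1/2})$, obtained by inclusion--exclusion over the primes dividing $m$ and counting squarefree integers in arithmetic progressions. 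Granting the identity $\psi_x(\mathcal L(s,\chi_D))=\sum_{n}\lambda_{2x}(n)\chi_D(n)n^{-s}$ together with $|\lambda_{2x}(n)|\ll_{\varepsilon}n^{\varepsilon}$, the case $\sigma>1$ is immediate: the series converges absolutely, one interchanges the two summations, applies the two facts termwise, and passes to the limit by dominated convergence on the $n$-sum.

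For $1/2<\sigma\leq1$ the Dirichlet series diverges, so I would fix a small $\eta>0$, set $N=Y^{\eta}$, and work with the smoothed sum $S_{N}(D)=\sum_{n}\lambda_{2x}(n)\chi_D(n)n^{-\sigma}e^{-n/N}$. Choose $\delta$ slightly below $16(\sigma-1/2)$ and invoke Assumption~\ref{ass} with modulus bound $Y$: say $D$ is \emph{good} if $L(s,\chi_D)$ has no zero in $1/2+\delta/16\leq\Re(s)\leq1$, $|\Im(s)|\leq2\log Y$. By Jutila's bound \eqref{jutila}, whose exponent $(7-6\sigma)/(6-4\sigma)$ is $<1$ for $\sigma>1/2$, the number of \emph{bad} $D$ with $|D|\leq Y$, and likewise the number of $D$ dropped by the ${\sum}^{\dag}$-restriction (these force a real zero of $L(\,\cdot\,,\chi_D)$ in $[\sigma,1]$), is $o(N(Y))$; since $\mathcal L(\sigma,\chi_D)$ is real for $D$ counted by ${\sum}^{\dag}$, so that $|\psi_x(\mathcal L(\sigma,\chi_D))|=1$, all such $D$ may be discarded with total error $o(1)$. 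For good $D$, Mellin's formula \eqref{mellin_c} yields $S_{N}(D)=\frac{1}{2\pi i}\int_{(c)}\Gamma(w)N^{w}\psi_x(\mathcal L(\sigma+w,\chi_D))\,dw$ for $c>1-\sigma$; shifting the line of integration to $\Re(w)=-c'$ with $0<c'<\sigma-1/2-\delta/16$ crosses only the simple pole of $\Gamma$ at $w=0$ ($\psi_x(\mathcal L(\sigma+w,\chi_D))$ being holomorphic in the rectangle traversed, since $L(\,\cdot\,,\chi_D)$ is zero-free there), with residue $\psi_x(\mathcal L(\sigma,\chi_D))$, and the remaining integral is, by Lemma~\ref{arg} and \eqref{for_th1} together with the decay \eqref{stirling} of $\Gamma$ that kills the horizontal segments (exactly as in Proposition~\ref{mu}), $\ll N^{-c'}\exp(Rc(\delta)\varepsilon_0^{1-a(\delta)}\log Y)=Y^{-\eta c'+Rc(\delta)\varepsilon_0^{1-a(\delta)}}$. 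Taking $\varepsilon_0$ small enough relative to $\eta c'$ makes this $o(1)$ uniformly over good $D$, so that $\psi_x(\mathcal L(\sigma,\chi_D))=S_{N}(D)+o(1)$.

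It then remains to average $S_{N}(D)$. Since $\sum_{n}|\lambda_{2x}(n)|n^{-\sigma}e^{-n/N}<\infty$, one may interchange the sums to get $N(Y)^{-1}{\sum}^{\dag}_{|D|\leq Y}S_{N}(D)=\sum_{n}\lambda_{2x}(n)n^{-\sigma}e^{-n/N}\cdot N(Y)^{-1}{\sum}^{\dag}_{|D|\leq Y}\chi_D(n)$. The non-square $n$ contribute $\ll_{\varepsilon}(Y^{-1/2}+o(1))N^{1-\sigma+\varepsilon}=o(1)$ once $\eta$ is small (using the first fact and the power saving on the dropped $D$), while the square terms $n=m^{2}$ give, by the second fact, $\sum_{m}\lambda_{2x}(m^{2})m^{-2\sigma}e^{-m^{2}/N}\prod_{p\mid m}p(p+1)^{-1}+o(1)$. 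Letting $Y\to\infty$, hence $N\to\infty$, and using dominated convergence with the summable majorant $|\lambda_{2x}(m^{2})|m^{-2\sigma}\ll_{\varepsilon}m^{\varepsilon-2\sigma}$ (valid since $2\sigma>1$) produces $\sum_{m\geq1}\lambda_{2x}(m^{2})m^{-2\sigma}\prod_{p\mid m}p(p+1)^{-1}$, the asserted value. The one genuinely delicate point is the uniform-in-$D$ contour shift for $1/2<\sigma\leq1$: the parameters $\eta$, $c'$, $\delta$, $\varepsilon_0$ must be calibrated so that the $Y^{o(1)}$ coming from Lemma~\ref{arg} is absorbed by $N^{-c'}=Y^{-\eta c'}$ while $N$ remains a genuine (if tiny) power of $Y$, which is what makes the square-root saving in the character sum usable; the remaining estimates are routine.
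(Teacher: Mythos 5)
Your proposal follows essentially the same route as the paper's proof: Mellin smoothing of $\psi_x(\mathcal{L}(\sigma,\chi_D))$ with a factor $e^{-n/X}$, a contour shift to the left of $\Re(w)=0$ justified by the zero-free rectangle of Assumption~\ref{ass} and bounded via Lemma~\ref{arg} and \eqref{stirling}, Jutila's estimate \eqref{jutila} to discard the discriminants violating that assumption (using $|\psi_x(\mathcal{L}(\sigma,\chi_D))|\leq 1$ on ${\sum}^{\dag}$), a calibration of $\varepsilon_0$ against the power of $X$, and finally the square/non-square dichotomy with $f_Y(n^2)=N(Y)\prod_{p\mid n}(1+1/p)^{-1}+O(Y^{1/2}d(n^2))$. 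The one inaccuracy is your ``first fact'': sieving to fundamental discriminants and applying P\'olya--Vinogradov gives $\underset{|D|\leq Y}{{\sum}^{\ast}}\chi_D(n)\ll Y^{1/2}n^{1/4+\varepsilon}$ for non-square $n$, not $Y^{1/2}n^{\varepsilon}$ (the latter is stronger than what P\'olya--Vinogradov yields); this is harmless in your argument because the smoothing restricts to $n\ll Y^{\eta}$ with $\eta$ tiny, and for $\sigma>1$ dominated convergence only needs the trivial bound plus termwise decay, but note that the paper instead uses the mean-square estimate \eqref{non-sq} with Cauchy--Schwarz, which is what permits its larger choice $X=Y^{\delta_1}$ and the unsmoothed treatment of the range $\sigma>5/4$.
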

For the proof of this proposition, we show
\begin{lemma}\label{asymptotic}
Let $Y$ be an sufficiently large number and $\chi$ a primitive Dirichlet character of conductor $q\leq Y$. 
For $1/2+\delta_1 \leq \sigma \leq 5/4$ with $0<\delta_1 <1/8$ and $X>1$, under Assumption~\ref{ass} with $\delta=8\delta_1$,  
there exist positive constants $c(\delta_1)$ and $a(\delta_1)$ with $0<a(\delta_1)<1$ such that
\[
\psi_x(\mathcal{L}(\sigma, \chi))
=
\sum_{n=1}^{\infty}
\frac{\lambda_{2x}(n)\chi(n)}{n^{\sigma}}e^{-n/X}
+
O\bigg(\Big(\frac{X^{1/2}}{Y^{\pi/2}}+\frac{1}{X^{\delta_1/2}}\Big)Y^{R c(\delta_1)\varepsilon_0^{(1-a(\delta_1))}}\bigg),
\]
where $|x|<R$ for any $R>0$.
\end{lemma}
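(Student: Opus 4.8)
The plan is to run a smoothed Mellin-transform argument with a single contour shift, in exactly the spirit of the proof of Proposition~\ref{mu}. Recall the identity $\psi_x(\mathcal{L}(s,\chi))=\sum_{n\ge 1}\lambda_{2x}(n)\chi(n)n^{-s}$ together with $|\lambda_{2x}(n)|\ll_{R,\varepsilon}n^{\varepsilon}$; since $\sigma\ge 1/2+\delta_1$, this Dirichlet series converges absolutely on the line $\Re(s)=1/2$. Combining it with Mellin's formula \eqref{mellin_c} written as $e^{-n/X}=\frac{1}{2\pi i}\int_{(1/2)}(X/n)^{s}\Gamma(s)\,ds$ and interchanging the sum and the integral (legitimate by absolute convergence and the rapid decay of $\Gamma$ on vertical lines) gives
\[
\sum_{n=1}^{\infty}\frac{\lambda_{2x}(n)\chi(n)}{n^{\sigma}}e^{-n/X}
=\frac{1}{2\pi i}\int_{(1/2)}\psi_x(\mathcal{L}(\sigma+s,\chi))\,X^{s}\Gamma(s)\,ds .
\]

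Next I would shift the portion of this line with $|\Im(s)|\le\log Y$ leftward to $\Re(s)=-\delta_1/2$, leaving the two tails $|\Im(s)|\ge\log Y$ on $\Re(s)=1/2$. Under Assumption~\ref{ass} with $\delta=8\delta_1$, the closed rectangle with corners $\tfrac12\pm i\log Y$ and $-\tfrac{\delta_1}{2}\pm i\log Y$ lies in the region where $L(\sigma+s,\chi)\ne 0$ (the part with $\Re(\sigma+s)\le 1$ by Assumption~\ref{ass}, the part with $\Re(\sigma+s)>1$ by the Euler product), so $\mathcal{L}(\sigma+s,\chi)$ — be it $\log L$ or $L'/L$ — is holomorphic on and inside it, and the only singularity enclosed is the simple pole of $\Gamma(s)$ at $s=0$, whose residue is $\psi_x(\mathcal{L}(\sigma,\chi))$. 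The residue theorem then gives
\[
\sum_{n=1}^{\infty}\frac{\lambda_{2x}(n)\chi(n)}{n^{\sigma}}e^{-n/X}
=\psi_x(\mathcal{L}(\sigma,\chi))+I_{\mathrm v}+I_{\mathrm h}+I_{\mathrm t},
\]
where $I_{\mathrm v}$ is the integral over $\Re(s)=-\delta_1/2$, $|\Im(s)|\le\log Y$, $I_{\mathrm h}$ over the two horizontal segments at $|\Im(s)|=\log Y$, and $I_{\mathrm t}$ over the two pieces of $\Re(s)=1/2$ with $|\Im(s)|\ge\log Y$.

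To bound the three error integrals I would combine Stirling's estimate \eqref{stirling}, namely $\Gamma(s)\ll|\Im(s)|^{\Re(s)-1/2}e^{-\pi|\Im(s)|/2}$ — which on $I_{\mathrm h}$ and $I_{\mathrm t}$ contributes a factor $\ll Y^{-\pi/2}$ (there $|\Im(s)|\ge\log Y$ and $\Re(s)\le 1/2$), and on $\Re(s)=-\delta_1/2$ leaves $\int_{-\infty}^{\infty}|\Gamma(-\tfrac{\delta_1}{2}+it)|\,dt\ll_{\delta_1}1$ — with the uniform bound $|\psi_x(\mathcal{L}(\sigma+s,\chi))|\le Y^{Rc(\delta_1)\varepsilon_0^{(1-a(\delta_1))}}$ valid along the whole contour: on the part with $1/2+\delta_1/2\le\Re(\sigma+s)\le 1$ this is precisely the bound of Lemma~\ref{arg} (which is why the conductor-dependent factor, rather than just $q^{\varepsilon}$, enters), and on the part with $\Re(\sigma+s)>1$ the same Borel--Carath\'eodory / Hadamard argument gives an even smaller bound (or one may use the trivial Euler-product estimate of the type in \eqref{cR} wherever $\Re(\sigma+s)$ stays away from $1$). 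Since $X>1$ we have $X^{\Re(s)}\le X^{1/2}$ on $I_{\mathrm h}\cup I_{\mathrm t}$ and $X^{\Re(s)}=X^{-\delta_1/2}$ on $I_{\mathrm v}$; multiplying the pieces together yields $I_{\mathrm h}+I_{\mathrm t}\ll X^{1/2}Y^{-\pi/2}\,Y^{Rc(\delta_1)\varepsilon_0^{(1-a(\delta_1))}}$ and $I_{\mathrm v}\ll X^{-\delta_1/2}\,Y^{Rc(\delta_1)\varepsilon_0^{(1-a(\delta_1))}}$, and the two displays above combine to the asserted identity with error $O\big((X^{1/2}Y^{-\pi/2}+X^{-\delta_1/2})Y^{Rc(\delta_1)\varepsilon_0^{(1-a(\delta_1))}}\big)$.

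The step I expect to take the most care is the uniform bound on $|\psi_x(\mathcal{L}(\sigma+s,\chi))|$ along the shifted contour: one must check that, for every $\sigma$ in the range $1/2+\delta_1\le\sigma\le 5/4$, the rectangle above can be positioned so that it lies simultaneously in the zero-free region supplied by Assumption~\ref{ass} and in the range of validity of the Hadamard three-circle argument of Lemma~\ref{arg} (equivalently, of Lemma~3.6 of Akbary and Hamieh \cite{akbary-hamieh}), and one must deal with the strip $1<\Re(\sigma+s)\le 7/4$ separately by elementary Euler-product bounds. Everything else is the routine Mellin-transform bookkeeping already carried out in the proof of Proposition~\ref{mu}.
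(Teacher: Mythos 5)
Your proposal is correct and follows essentially the same route as the paper: the same Mellin representation, the same contour shift to $\Re(s)=-\delta_1/2$ with tails left at $\Re(s)=1/2$, the residue of $\Gamma$ at $s=0$ producing the main term, and Stirling plus Lemma~\ref{arg} (with trivial Euler-product bounds where $\Re(\sigma+s)>1$) controlling the three error integrals. Your explicit remark that the horizontal segments cross the line $\Re(\sigma+s)=1$ and must be handled by splitting between Lemma~\ref{arg} and elementary bounds is a point the paper glosses over, but it does not change the argument.
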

\begin{proof}
For $z=u+iv$, Mellin's formula \eqref{mellin_c} yields
\[
\sum_{n=1}^{\infty}
\frac{\lambda_{2x}(n)\chi(n)}{n^{\sigma}}e^{-n/X}
=
\frac{1}{2\pi i}
\int_{(1/2)}\psi_x(\mathcal{L}(\sigma+z, \chi)) X^z\Gamma(z)dz.
\]
Since we assume Assumption~\ref{ass}, 
by the residue theorem, we have
\begin{align}\label{int}
\sum_{n=1}^{\infty}
\frac{\lambda_{2x}(n)\chi(n)}{n^{\sigma}}e^{-n/X}
=&
\psi_x(\mathcal{L}(\sigma, \chi))
+
\frac{1}{2\pi i}\int_{\substack{|v|\geq\log Y\\ u=1/2}}
\psi_x(\mathcal{L}(\sigma+z, \chi)) X^z\Gamma(z)dz
\nonumber\\
&+
\frac{1}{2\pi i}\int_{\substack{|v|=\log Y \\ -\delta_1/2\leq u \leq 1/2}}
\psi_x(\mathcal{L}(\sigma+z, \chi)) X^z\Gamma(z)dz
\nonumber\\
&+
\frac{1}{2\pi i}\int_{\substack{|v|\leq\log Y \\ u=-\delta_1/2}}
\psi_x(\mathcal{L}(\sigma+z, \chi)) X^z\Gamma(z)dz
\nonumber\\
=&
\psi_x(\mathcal{L}(\sigma, \chi))
+
I_1+I_2+I_3,
\end{align}
say. On the path of integration of $I_1$, since 
\[
|\psi_x(\mathcal{L}(\sigma+z))|
<
\exp(-x\arg\mathcal{L}(\sigma+z))
<
\exp(R|\mathcal{L}(\sigma+z)|)\ll_{\delta_1, R} 1
\]
and \eqref{stirling}, we have
\[
I_1
\ll_{\delta_1, R} 
\int_{\log Y}^{\infty}
X^{1/2}e^{-\pi v/2} dv
\ll
X^{1/2}Y^{-\pi/2}.
\]
By Lemma~\ref{arg} and \eqref{stirling}, we see
\[
I_2
\ll
Y^{R c(\delta_1) \varepsilon_0^{1-a(\delta_1)}} X^{1/2} Y^{-\pi/2} 
\]
and
\[
I_3
\ll
Y^{R c(\delta_1) \varepsilon_0^{1-a(\delta_1)}} X^{-\delta_1/2}.
\]
\end{proof}
\begin{proof}[Proof of Proposition~\ref{key_prop}]
We recall that $N(Y)$ is the number of fundamental discriminants $D$ with $|D|\leq Y$, and we have
\begin{equation}\label{N_Y}
N(Y)=\underset{|D|\leq Y}{{\sum}^{\ast}} 1=\frac{6}{\pi^2} Y+O(Y^{1/2})
\end{equation}
(see \cite{mourtada-murty}). 
We see
\begin{align}\label{tg}
&
\frac{1}{N(Y)}
\underset{|D|\leq Y}{{\sum}^{\dag}}
\psi_x(\mathcal{L}(\sigma, \chi_D))
\nonumber\\
=&
\frac{1}{N(Y)}
\underset{|D|\leq Y}{{\sum}^{\dag_1}}
\psi_x(\mathcal{L}(\sigma, \chi_D))
+
\frac{1}{N(Y)}
\underset{|D|\leq Y}{{\sum}^{\dag_2}}
\psi_x(\mathcal{L}(\sigma, \chi_D)),
\end{align}
where $\sum^{\dag_1}$ is the sum over $D$ which satisfies the condition that $L(s, \chi_D)$ does not have a zero in $\mathscr{D}$ (this means Assumption~\ref{ass} holds) and $\sum^{\dag_2}$ is the remaining. 
\par
At first, we consider the case $1/2+\delta_1 \leq \sigma \leq 5/4$ with $0<\delta_1 \leq 1/8$. 
Since $L(\sigma, \chi_D)>0$ in ${\sum}^{\dag}$, by \eqref{jutila} and \eqref{N_Y} with $\varepsilon=\delta_1/2$, we know
\begin{align}\label{sum_dag2}
\Big|
\frac{1}{N(Y)}
\underset{|D|\leq Y}{{\sum}^{\dag_2}}
\psi_x(\mathcal{L}(\sigma, \chi_D))
\Big|
\leq&
\frac{1}{N(Y)}
\underset{|D|\leq Y}{{\sum}^{\dag_2}}
|\psi_x(\mathcal{L}(\sigma, \chi_D))|
\leq
\frac{1}{N(Y)}
\underset{|D|\leq Y}{{\sum}^{\dag_2}} 1
\nonumber\\
\ll&
\frac{(Y\log Y)^{(7-6\sigma)/(6-4\sigma)+\varepsilon}}{N(Y)}
\nonumber\\
\ll&
Y^{(1-2\sigma)/(6-4\sigma)+\varepsilon}
(\log Y)^{(7-6\sigma)/(6-4\sigma)+\varepsilon}
\nonumber\\
\ll_\varepsilon&
Y^{-2\delta_1+2\varepsilon}
\ll_{\delta_1} Y^{-\delta_1}.
\end{align}
From  Lemma~\ref{asymptotic}, the equation \eqref{tg} and \eqref{sum_dag2}, we have
\begin{align*}
\frac{1}{N(Y)}
\underset{|D|\leq Y}{{\sum}^{\dag}}
\psi_x(\mathcal{L}(\sigma, \chi_D))
=&
\sum_{n=1}^{\infty}
\frac{\lambda_{2x}(n)e^{-n/X}}{n^{\sigma}}
\bigg(
\frac{1}{N(Y)}
\underset{|D|\leq Y}{{\sum}^{\dag_1}}\chi_D(n)
\bigg)
\\
&
+O\bigg(\Big(\frac{X^{1/2}}{Y^{\pi/2}}+\frac{1}{X^{\delta_1/2}}\Big)Y^{R c(\delta_1)\varepsilon_0^{(1-a(\delta_1))}}
+Y^{-\delta_1}\bigg)
\\
=&
\sum_{n=1}^{\infty}
\frac{\lambda_{2x}(n)e^{-n/X}}{n^{\sigma}}
\bigg(
\frac{1}{N(Y)}
\underset{|D|\leq Y}{{\sum}^{\ast}} \chi_D(n)
\bigg)
\\
&
+
O\bigg(
\sum_{n=1}^{\infty}
\frac{|\lambda_{2x}(n)|X^{1/2}}{n^{\sigma+1/2}}
\bigg(
\frac{1}{N(Y)}
\underset{|D|\leq Y}{{\sum}^{\ddag}} 1
\bigg)
\bigg)
\\
&
+O\bigg(
\Big(\frac{X^{1/2}}{Y^{\pi/2}}+\frac{1}{X^{\delta_1/2}}\Big)Y^{R c(\delta_1)\varepsilon_0^{(1-a(\delta_1))}}+Y^{-\delta_1}\bigg),
\end{align*}
where $\sum^{\ddag}={\sum}^{\ast}-{\sum}^{\dag_1}$.
If $D$ appears in $\sum^{\ddag}$, 
then $\chi_D$ does not satisfy Assumption~\ref{ass}.
Hence, by the calculations similar to \eqref{sum_dag2}, 
putting $X=Y^{\delta_1}$ yields
\[
\sum_{n=1}^{\infty}
\frac{|\lambda_{2x}(n)|X^{1/2}}{n^{\sigma+1/2}}
\bigg(
\frac{1}{N(Y)}
\underset{|D|\leq Y}{{\sum}^{\ddag}} 1
\bigg)
\ll_{\delta_1}
X^{1/2}Y^{-\delta_1}
=
Y^{-\delta_1/2}.
\] 
Then we obtain
\begin{align*}
&
\frac{1}{N(Y)}
\underset{|D|\leq Y}{{\sum}^{\dag}}
\psi_x(\mathcal{L}(\sigma, \chi_D))
\\
=&
\frac{1}{N(Y)}
\sum_{n=1}^{\infty}
\frac{\lambda_{2x}(n)e^{-n/X}f_Y(n)}{n^{\sigma}}
+O\Big(Y^{R c(\delta_1)\varepsilon_0^{(1-a(\delta_1))}-\delta_1^2/2}+Y^{-\delta_1}\Big),
\end{align*}
where
\[
f_Y(n)=\underset{|D|\leq Y}{{\sum}^{\ast}}\chi_D(n).
\]
We can take a suitable $\varepsilon_0>0$ which satisfies $R c(\delta_1) \varepsilon_0^{(1-a(\delta_1))}<\delta_1^2/4$  (we take a suitable large $Y$ accordingly) in advance. 
Hence we obtain
\begin{align*}
&
\frac{1}{N(Y)}
\underset{|D|\leq Y}{{\sum}^{\dag}}
\psi_x(\mathcal{L}(\sigma, \chi_D))
=
\frac{1}{N(Y)}
\sum_{n=1}^{\infty}
\frac{\lambda_{2x}(n)e^{-n/X}f_Y(n)}{n^{\sigma}}
+
O(Y^{-\delta_1^2/4})
\nonumber\\
=&
\frac{1}{N(Y)}
\sum_{\substack{n=1\\ \text{square}}}^{\infty}
\frac{\lambda_{2x}(n)e^{-n/X}f_Y(n)}{n^{\sigma}}
\nonumber
\\
&+
\frac{1}{N(Y)}
\sum_{\substack{n=1\\ \text{non-sq}}}^{\infty}
\frac{\lambda_{2x}(n)e^{-n/X}f_Y(n)}{n^{\sigma}}
+
O(Y^{-\delta_1^2/4}),
\end{align*}
where $\sum_{\text{square}}$ is the sum over square integers and $\sum_{\text{non-sq}}$ is the sum over non-square integers.
We will calculate these summations by \eqref{N_Y},
\begin{equation}\label{non-sq}
\sum_{\substack{n=1\\ \text{non-sq}}}^N
|f_Y(n)|^2 \ll YN(\log N)^4
\end{equation}
and
\begin{align}\label{sq}
f_Y(n^2)
=&
\frac{6}{\pi^2}\prod_{p\mid n}\bigg(1+\frac{1}{p}\bigg)^{-1}Y+O(Y^{1/2}d(n^2))
\nonumber
\\
=&
N(Y)\prod_{p\mid n}\bigg(1+\frac{1}{p}\bigg)^{-1}+O(Y^{1/2}d(n^2)),
\end{align}
where $d(n)$ is the divisor function which means the number of positive divisors of $n$.
These facts are in Mourtada and Murty~\cite{mourtada-murty},  and the following calculations are analogous to the calculations in \cite{mourtada-murty}.
By \eqref{non-sq}, we have
\begin{align*}
&
\sum_{\substack{n=1\\ \text{non-sq}}}^{\infty}
\frac{\lambda_{2x}(n)e^{-n/X}f_Y(n)}{n^{\sigma}}
\\
\ll_{\delta_1} &
\sum_{\substack{1\leq n \leq X\\ \text{non-sq}}}
\frac{n^{\delta_1/2}e^{-n/X}|f_Y(n)|}{n^{1/2+\delta_1}}
+
\sum_{\substack{n>X\\ \text{non-sq}}}
\frac{n^{\delta_1/2}e^{-n/X}|f_Y(n)|}{n^{1/2+\delta_1}}
\\
\leq &
\sum_{\substack{n\leq X \\ \text{non-sq}}}
\frac{|f_Y(n)|}{n^{(1+\delta_1)/2}}
+
\sum_{\substack{n\geq X \\ \text{non-sq}}}
\frac{X^{1/2}|f_Y(n)|}{n^{1+\delta_1/2}}
\\
\leq &
\Big(
\sum_{\substack{n\leq X \\ \text{non-sq}}}
|f_Y(n)|^2\Big)^{1/2}
\Big(
\sum_{\substack{n\leq X \\ \text{non-sq}}}
\frac{1}{n^{1+\delta_1}}\Big)^{1/2}
\\
&+
X^{1/2}
\Big(
\sum_{\substack{n> X \\ \text{non-sq}}}
\frac{|f_Y(n)|^2}{n^{1+\delta_1/2}}
\Big)^{1/2}
\Big(
\sum_{\substack{n> X \\ \text{non-sq}}}
\Big)^{1/2}
\\
\ll_{\delta_1}&
Y^{1/2}X^{1/2}(\log X)^2
\ll_{\delta_1}
Y^{(1+\delta_1)/2}(\log Y)^2
\ll
Y^{7/8}(\log Y)^2
\end{align*}
and we obtain
\begin{align}\label{TG}
&
\frac{1}{N(Y)}
\underset{|D|\leq Y}{{\sum}^{\dag}}
\psi_x(\mathcal{L}(\sigma, \chi_D))
\nonumber\\
=&
\frac{1}{N(Y)}
\sum_{n=1}^{\infty}
\frac{\lambda_{2x}(n^2)e^{-n^2/X}f_Y(n^2)}{n^{2\sigma}}
+
O_{\delta_1, R}(Y^{-1/8}(\log Y)^2+Y^{-\delta_1^2/4})
\nonumber\\
=&
\sum_{n=1}^{\infty}
\frac{\lambda_{2x}(n^2)\prod_{p\mid n}p(p+1)^{-1}}{n^{2\sigma}}
e^{-n^2/X}
+
O_{\delta_1, R}(Y^{-1/8}(\log Y)^2+Y^{-\delta_1^2/4})
\nonumber\\
=&
\sum_{n=1}^{\infty}
\frac{\lambda_{2x}(n^2)\prod_{p\mid n}p(p+1)^{-1}}{n^{2\sigma}}
+
O_{\delta_1, R}\Big(
\sum_{n=1}^{\infty}
\frac{|\lambda_{2x}(n^2)|}{n^{1+2\delta_1}}
|e^{-n^2/X}-1|
\Big)
\nonumber\\
&+
O_{\delta_1, R}(Y^{-1/8}(\log Y)^2+Y^{-\delta_1^2/4}).
\end{align}
Since $|e^{-a}-1| \leq 2a^{\delta_1/2}/\delta_1$ for $0< a < 1$, we see that the first error term in the right-hand side of \eqref{TG} is estimated by
\begin{align*}
\sum_{n=1}^{\infty}
\frac{|\lambda_{2x}(n^2)|}{n^{1+2\delta_1}}
|e^{-n^2/X}-1|
\ll_{\delta_1}&
\sum_{n<\sqrt{X}}\frac{1}{n^{1+3\delta_1/2}}\bigg(\frac{n^2}{X}\bigg)^{\delta_1/2}
+
\sum_{n\geq\sqrt{X}}\frac{1}{n^{1+3\delta_1/2}}
\\
\ll_{\delta_1}&
X^{-\delta_1/2}=Y^{-\delta_1^2/2}.
\end{align*}
Therefore we obtain
\begin{align*}
&
\frac{1}{N(Y)}
\underset{|D|\leq Y}{{\sum}^{\dag}}
\psi_x(\mathcal{L}(\sigma, \chi_D))
\\
=&
\sum_{n=1}^{\infty}
\frac{\lambda_{2x}(n^2)\prod_{p\mid n}p(p+1)^{-1}}{n^{2\sigma}}
+
O_{\delta_1, R}(Y^{-1/8}(\log Y)^2+Y^{-\delta_1^2/4}).
\end{align*}
\par
Secondly, we consider the case $\sigma>5/4$. 
In this case we know $\sum^*=\sum^{\dag}$.
We have some $\varepsilon_1>0$ for which 
\begin{align*}
&
\frac{1}{N(Y)}
\underset{|D|\leq Y}{{\sum}^{\dag}} 
\psi_x(\mathcal{L}(\sigma, \chi_D))
=
\frac{1}{N(Y)}
\sum_{n=1}^{\infty}\frac{\lambda_{2x}(n)}{n^{\sigma}}f_Y(n)
\\
=&
\frac{1}{N(Y)}
\sum_{\substack{1\leq n\\ \text{square}}}
\frac{\lambda_{2x}(n)}{n^{\sigma}}f_Y(n)
+
\frac{1}{N(Y)}
\sum_{\substack{1\leq n\\ \text{non-sq}}}
\frac{\lambda_{2x}(n)}{n^{\sigma}}f_Y(n)
\\
=&
\sum_{n=1}^{\infty}
\frac{\lambda_{2x}(n)\prod_{p\mid n}p(p+1)^{-1}}{n^{2\sigma}}
+
O(Y^{-\varepsilon_1})
\end{align*}
holds.
This is obtained by similar calculation to the above (see \cite{mourtada-murty}).
\end{proof}
%
%
\section{The proof of Theorem~\ref{main1}}\label{section_proof_main1}
We obtain Theorem~\ref{main1} for the case $\mathcal{L}(\sigma, \chi_D)=L'(\sigma, \chi_D)/L(\sigma, \chi_D)$ by Proposition~\ref{key_prop} and the works of Mourtada and Murty \cite{mourtada-murty}. 
We will prove Theorem~\ref{main1} in the case $\mathcal{L}(\sigma, \chi_D)=\log L(\sigma, \chi_D)$.
Put
\[
\widetilde{\mathcal{Q}_{\sigma}}(x)
=
\sum_{n=1}^{\infty} \frac{\lambda_{2x}(n^2)\prod_{p\mid n}(1+\frac{1}{p})^{-1}}{n^{2\sigma}}.
\]
Proposition~\ref{key_prop} yields
\[
\lim_{Y\to\infty}\frac{1}{N(Y)}
\underset{|D|\leq Y}{{\sum}^{\ast}} 
\psi_x(\log L(\sigma, \chi_D))
=
\widetilde{\mathcal{Q}_{\sigma}}(x)
\]
uniformly on $|x|\leq R$ and $\sigma\geq\frac{1}{2}+\delta$.
Here we know
\[
\widetilde{\mathcal{Q}_{\sigma}}(x)
=
\prod_p\bigg(1+\frac{p}{p+1}\sum_{r=1}^{\infty}\frac{\lambda_{2x}(p^{2r})}{p^{2r\sigma}}\bigg)
=
\prod_p\bigg(1+\frac{p}{p+1}\sum_{r=1}^{\infty}\frac{H_{2r}(ix)}{p^{2r\sigma}}\bigg).
\]
Let
\[
\widetilde{\mathcal{Q}_{\sigma, p}}(x)=
1+\frac{p}{p+1}\sum_{r=1}^{\infty}\frac{H_{2r}(ix)}{p^{2r\sigma}}
\]
and we write
$\widetilde{\mathcal{Q}_{\sigma}}(x)=\prod_p \widetilde{\mathcal{Q}_{\sigma, p}}(x)$.
We know
\begin{align*}
\widetilde{\mathcal{Q}_{\sigma, p}}(x)
=&
1+\frac{p}{2(p+1)}
\bigg(\sum_{r=1}^{\infty}\frac{H_{r}(ix)}{(p^{\sigma})^r}
+\sum_{r=1}^{\infty}\frac{H_{r}(ix)}{(-p^{\sigma})^r}
\bigg)
\\
=&
1+\frac{p}{2(p+1)}
\bigg(\frac{1}{(1-p^{-\sigma})^{ix}}+\frac{1}{(1+p^{-\sigma})^{ix}}
-2\bigg)
\\
=&
\frac{1}{p+1}+\frac{p}{2(p+1)}
\bigg(\frac{1}{(1-p^{-\sigma})^{ix}}+\frac{1}{(1+p^{-\sigma})^{ix}}\bigg).
\end{align*}
Since
\begin{align*}
&
\frac{1}{(1-p^{-\sigma})^{ix}}+\frac{1}{(1+p^{-\sigma})^{ix}}
=
\exp(-ix\log(1-p^{-\sigma}))+\exp(-ix\log(1+p^{-\sigma}))
\\
=&
\cos(x\log(1-p^{-\sigma}))-i\sin(x\log(1-p^{-\sigma}))
\\
&+
\cos(x\log(1+p^{-\sigma}))-i\sin(x\log(1+p^{-\sigma}))
\\
=&
2\exp\bigg(\frac{-ix\log(1-p^{-2\sigma})}{2}\bigg)
\cos\frac{x\log(1-p^{-\sigma})/(1+p^{-\sigma})}{2},
\end{align*}
we have
\begin{align*}
|\widetilde{\mathcal{Q}_{\sigma, p}}(x)|
\leq &
\frac{1}{p+1}+\frac{p}{p+1}
\left|\cos\bigg(\frac{x}{2}\log\bigg(1-\frac{2}{p^{\sigma}+1}\bigg)\bigg)\right|.
\end{align*}
For any prime number $p$, we see $|\widetilde{\mathcal{Q}}_{\sigma, p}|\leq 1$. 
For $x\in\mathbb{R}$, if $p$ satisfies
\[
\frac{\pi}{2^{\sigma}6}<\left|\frac{x}{2}\log \bigg(1-\frac{2}{p^{\sigma}+1}\bigg)\right|<\frac{2\pi}{3},
\]
then we have
\begin{equation}\label{cos}
|\widetilde{\mathcal{Q}_{\sigma, p}}(x)|
\leq
\frac{1+c(\sigma)p}{p+1}
\leq \frac{1+2c(\sigma)}{3},
\end{equation}
where
\[
c(\sigma)=\cos\frac{\pi}{2^{\sigma}6}.
\]
Since
\begin{align*}
t<-\log(1-t)
=&
t+\frac{t^2}{2}+\frac{t^3}{3}+\ldots
\\
<&
t+t^2+t^3+\ldots =\frac{t}{1-t}
\end{align*}
for $0<t<1$, we know
\begin{align*}
\frac{1}{p^{\sigma}} 
<
\frac{2}{p^{\sigma}+1} 
<& 
-\log \bigg(1-\frac{2}{p^{\sigma}+1}\bigg)
\\
<& 
-\log \bigg(1-\frac{2}{p^{\sigma}}\bigg)
<\frac{2}{p^{\sigma}-2}
<\frac{2}{p^{\sigma}-p^{\sigma}/2}=\frac{4}{p^{\sigma}}
\end{align*}
for $p^{\sigma}>4$.
Since $\sigma>1/2$, the prime $p>16$ in
\[
S_{\sigma}(x):=\left\{p \;:\; \frac{3|x|}{\pi} < p^{\sigma}\leq \frac{2^{\sigma}3|x|}{\pi}\right\} 
\]
satisfies
\begin{align*}
\frac{\pi}{2^{\sigma}6}\leq \frac{|x|}{2p^{\sigma}}
< -\frac{|x|}{2}\log \bigg(1-\frac{2}{p^{\sigma}+1}\bigg)
<\frac{2|x|}{p^{\sigma}}
<\frac{2\pi}{3}.
\end{align*}
The well-known fact of the asymptotic formula of $\pi(2x)-2\pi(x)$ yields
\begin{align*}
|S_{\sigma}(x)|
=&
\pi\bigg(2\bigg(\frac{3|x|}{\pi}\bigg)^{1/\sigma}\bigg)
-
\pi\bigg(\bigg(\frac{3|x|}{\pi}\bigg)^{1/\sigma}\bigg)
\\
=&
\pi\bigg(\bigg(\frac{3|x|}{\pi}\bigg)^{1/\sigma}\bigg)
+O\left(\bigg(\frac{3|x|}{\pi}\bigg)^{1/\sigma}
\bigg(\log \bigg(\frac{3|x|}{\pi}\bigg)^{1/\sigma}\bigg)^{-2}\right)
\\
>&
\alpha |x|^{1/(2\sigma)},
\end{align*}
for large $|x|$, where $\alpha$ is a positive constant depending on $\sigma$.
Therefore we obtain
\begin{align*}
\widetilde{\mathcal{Q}_{\sigma}}(x)
\leq \bigg(\frac{1+2c(\sigma)}{3}\bigg)^{|S_{\sigma}(x)|}
=&\exp\bigg(-(|S_{\sigma}(x)|)\log\frac{3}{1+2c(\sigma)}\bigg)
\\
\ll &\exp(-C_{\sigma}|x|^{1/(2\sigma)}),
\end{align*}
where $C_{\sigma}=\alpha\log 3(1+2c(\sigma))^{-1}>0$.
Hence 
\[
\int_{\mathbb{R}}|x^k|\widetilde{\mathcal{Q}_{\sigma}}(x) dx
\]
converges for all $k\geq 0$, which implies that its Fourier inverse
\[
\mathcal{Q}_{\sigma}(u)
=\frac{1}{\sqrt{2\pi}}\int_{\mathbb{R}}\exp(-iux)\widetilde{\mathcal{Q}_{\sigma}}(x)dx
\]
exists.
Moreover we know
\[
\int_{\mathbb{R}}\mathcal{Q}_{\sigma}(u)du
=
\widetilde{\mathcal{Q}_{\sigma}}(0)
=1
\]
by the definition of $\lambda_{2\xi}(n)$, 
and
\begin{align*}
\overline{\mathcal{Q}_{\sigma}(u)}
=&
\frac{1}{\sqrt{2\pi}}\int_{\mathbb{R}}\exp(iu x)\widetilde{\mathcal{Q}_{\sigma}}(-x)dx
\\
=&
\frac{1}{\sqrt{2\pi}}\int_{\mathbb{R}}\exp(-iu y)\widetilde{\mathcal{Q}_{\sigma}}(y)dy
=
\mathcal{Q}_{\sigma}(u).
\end{align*}
The same argument as the work of Mourtada and Murty~\cite{mourtada-murty} implies Theorem~\ref{main1}.
%
%

\bigskip
\noindent
Manami Hosoi:\\
Department of Mathematical and Physical Sciences,\\
Nara Women's University,\\
Kitauoya Nishimachi, Nara 630-8506, Japan.
\bigskip

\noindent
Yumiko Umegaki:\\
Department of Mathematical and Physical Sciences,\\
Nara Women's University,\\
Kitauoya Nishimachi, Nara 630-8506, Japan.\\
ichihara@cc.nara-wu.ac.jp

\begin{thebibliography}{99}
\bibitem{akbary-hamieh} A. Akbary and A. Hamieh,
Value-distribution of cubic Hecke $L$-functions,
J. Number Theory \textbf{206} (2020) 81-122.
%
%
\bibitem{duke} W. Duke,
Extreme values of Artin $L$-functions and class numbers,
Compositio Math. \textbf{136} (2003) 103-115.
%
%
%
\bibitem{ihara} Y. Ihara, 
On ``$M$-Functions'' closely related to the distribution of $L'/L$-values, 
Publ. RIMS, Kyoto Univ. \textbf{44} (2008) 893-954.
%
\bibitem{ihara-matsumoto-QJM} Y. Ihara and K. Matsumoto,
On certain mean values and the value-distribution of logarithms of Dirichlet $L$-functions,
Quart. J. Math. \textbf{62} (2011) 637-677.
%
\bibitem{ihara-matsumoto-moscow} Y. Ihara and K. Matsumoto, 
On $\log L$ and $L^{\prime}/L$ for $L$-functions and the associated 
``$M$-functions": connections in optimal cases,
Moscow Math. J. \textbf{11} (2011), 73-111. 
%
\bibitem{jutila} M. Jutila,
On mean values of Dirichlet polynomials with real characters, ibid. \textbf{27} (1975), 191-198.
%
%
\bibitem{LMMU} P. Lebacque, K. Matsumoto, M. Mine and Y. Umegaki,
  On the value-distribution of the logarithms of symmetric power $L$-functions in the level aspect, preprint, arXiv:2209.11918.
 %
\bibitem{matsumoto} K. Matsumoto,
An $M$-function associated with Goldbach's problem,
J. Ramanujan Math. Soc. \textbf{36} (2021), no. 4, 339-352
%
\bibitem{matsumoto-umegaki-ijnt} K. Matsumoto and Y. Umegaki,
On the value-distribution of the difference between logarithms of two symmetric power $L$-functions, Int. J. Number Theory \textbf{14}(2018), no. 7, 2045--2081.
%
%
\bibitem{mourtada-murty} M. Mourtada and V. K. Murty,
Distribution of values of $L^{\prime}/L(\sigma,\chi_D)$,
Moscow Math. J. \textbf{15} (2015), 497-509.
%
\bibitem{montogomery_book} H. L. Montgomery,
{\it Topics in Multiplicative Number Theory}, Lecture Notes in Mathematics 227, Springer, Berlin, 1971.
%
\bibitem{montgomery-varghan} H. L. Montgomery and R. C. Vaughan, 
Multiplicative Number Theory I. Classical Theory, 
Cambridge Stud. Adv. Math., vol. 97,
Cambridge University Press, Cambridge (2007).
\end{thebibliography}
\end{document}